\newcommand{\myappendix}{Appendix~} 
\newcommand{\eqref}[1]{(\ref{#1})}
\newtheorem{thrm}{Theorem}
\newtheorem{lem}[thrm]{Lemma}
\newtheorem{cor}[thrm]{Corollary}
\newtheorem{conj}[thrm]{Conjecture}
\newcommand{\bvec}[1] {{\mathbf {#1}}}
\newcommand{\BPC}{{P_{\text{cr}}^{\text{\tiny B}}}}
\newcommand{\BR}{{R_{\text{\tiny B}}}}
\newcommand{\BS}{{S_{\text{\tiny B}}}}
\newcommand{\TCrit}{{T_{\rm c}}}
\newcommand{\Schrodinger}{{Schr\"odinger }}
\DeclareMathOperator \undefined
\newcommand{\mycaption}[1]{\parbox{0.95\textwidth}{\caption{{#1}}}}
\newcommand{\Real}{{\mathbb R}}
\newcommand{\abs}[1]{{\left\vert{#1}\right\vert}}
\newcommand{\norm}[1]{{\left\Vert{#1}\right\Vert}}
\newcommand{\BSlin}{{S_{\text{\tiny B}}^{lin}}}
\begin{document}

\renewcommand{\thesubfigure}{{\Alph{subfigure}}}

\title{Singular solutions of the $L^2$-supercritical biharmonic %
Nonlinear Schr\"odinger equation}

\makeatletter
\author{{G. Baruch$^*$}, {G. Fibich},\\
    School of Mathematical Sciences, Tel Aviv University, Tel Aviv 69978,
    Israel \\
    $^*$Corresponding author, guy.baruch@math.tau.ac.il
}
\makeatother
\maketitle

\begin{abstract}
    We use asymptotic analysis and numerical simulations to study peak-type
    singular solutions of the supercritical biharmonic NLS.
    These solutions have a quartic-root blowup rate, and collapse with a quasi
    self-similar universal profile, which is a zero-Hamiltonian solution of a
    fourth-order nonlinear eigenvalue problem.
\end{abstract}

\section{\label{sec:intro}Introduction}

The focusing nonlinear \Schrodinger equation (NLS)
\begin{equation}    \label{eq:NLS}
    i\psi_t(t,\bvec{x}) + \Delta\psi + \left|\psi\right|^{2\sigma}\psi = 0,
    \qquad \psi(0,\bvec{x}) = \psi_0(\bvec{x})\in H^1(\Real^d),
\end{equation}
where~$
    \bvec x=\left(x_1,\dots,x_d\right)\in\Real^d
$ and~$
    \Delta = \sum_{j=1}^d\partial_j^2
$ is the Laplacian, has been the subject of intense study, due to its role in
various areas of physics, such as nonlinear optics and Bose-Einstein Condensates
(BEC).
It is well-known that the NLS~\eqref{eq:NLS} possesses solutions that become
singular in a finite time~\cite{Sulem-99}.

In recent years, there has been a growing interest in extending NLS theory to the focusing biharmonic nonlinear \Schrodinger equation
(BNLS)
\begin{equation}    \label{eq:BNLS}
    i\psi_t(t,\bvec{x}) - \Delta^2\psi + \left|\psi\right|^{2\sigma}\psi = 0,
    \qquad \psi(0,\bvec{x}) = \psi_0(\bvec{x})\in H^2(\Real^d),
\end{equation}
where~$\Delta^2$ is the biharmonic operator.
The BNLS~\eqref{eq:BNLS} is called~``{\em $L^2$-critical}'', or simply
``critical'' if~$\sigma d=4$.
In this case, equation~\eqref{eq:BNLS}  can be rewritten as
\begin{equation}    \label{eq:CBNLS}
    i\psi_t(t,\bvec{x}) - \Delta^2\psi
        +\left|\psi\right|^{8/d}\psi  =  0,
    \qquad \psi(0,\bvec{x}) = \psi_0(\bvec{x})\in H^2(\Real^d).
\end{equation}
Correspondingly, the BNLS with~$0<\sigma d<4$ is called subcritical, and the BNLS 
with~$\sigma d>4$ is called supercritical.
This is analogous to the NLS, where the critical case is~$\sigma d=2$.

BNLS solutions preserve the {\em power} ($L^2$~norm)
$$
   P(t) \equiv P(0), \qquad P = ||\psi||_2^2,
$$
and the Hamiltonian 
$$
   H(t) \equiv H(0), \qquad H = ||\Delta \psi||_2^2 - \frac{1}{\sigma+1} ||\Delta \psi||_{2 \sigma+2}^{2 \sigma +2}.
$$
In~\cite{Ben-Artzi-00}, Ben-Artzi, Koch and Saut proved that when~$\sigma$ is in
the \emph{$H^2$-subcritical} regime
\begin{equation}    \label{eq:admissible-range}
    \begin{cases}
        0<\sigma & \quad d\leq4,\\
        0<\sigma<\frac4{d-4} & \quad d>4,
    \end{cases}
\end{equation} 
 the BNLS~\eqref{eq:BNLS} is locally well-posed in~$H^2$.
Global existence and scattering of BNLS solutions in the $H^2$-critical 
case~$\sigma=4/(d-4)$ were studied by Miao, Xu and Zhao~\cite{Miao20093715} and by
Pausader~\cite{Pausader_DCDS-2009a}.
The latter work also showed well-posedness for small data.
The $H^2$-critical defocusing BNLS was studied by Miao, Xu and Zhao~\cite{miao-2008}
and by Pausader~\cite{Pausader_DPDEs-2007,Pausader_JFA2009}.

The above studies focused on non-singular solutions.
In this work, we study singular solutions of the BNLS in~$H^2$, i.e., solutions that
exist in~$H^2(\Real^d)$ over some finite time interval~$t\in[0,\TCrit)$, 
but for which~$
    \displaystyle \lim_{t\to\TCrit} \norm{\psi}_{H^2} = \infty.
$
The first study of singular BNLS solutions was done by Fibich, Ilan and
Papanicolau~\cite{Fibich_Ilan_George_BNLS:2002}, who proved the following results:
\begin{thrm} \label{thrm:GE_subcritical}
    Let $\psi_0 \in H^2$. Then, the solution of the subcritical BNLS~\eqref{eq:BNLS} exists
    globally in~$H^2$.
\end{thrm}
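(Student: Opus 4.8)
The plan is to combine the two conservation laws with a Gagliardo--Nirenberg interpolation inequality and to invoke the blow-up alternative from the local theory. By the local well-posedness result of Ben-Artzi, Koch and Saut~\cite{Ben-Artzi-00}, a solution that fails to exist globally must satisfy $\lim_{t\to\TCrit}\norm{\psi}_{H^2}=\infty$ at its maximal existence time $\TCrit<\infty$. Since $\norm{\psi}_{H^2}^2$ is equivalent to $\norm{\psi}_2^2+\norm{\Delta\psi}_2^2$ (immediate from Plancherel and $1+\abs{\xi}^4\le(1+\abs{\xi}^2)^2\le 2(1+\abs{\xi}^4)$), and since the power $P=\norm{\psi}_2^2$ is conserved, it suffices to establish an a priori bound on $\norm{\Delta\psi}_2$ depending only on $P(0)$ and $H(0)$.

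First I would record the Gagliardo--Nirenberg inequality
$$
  \norm{\psi}_{2\sigma+2}^{2\sigma+2}
    \le C\,\norm{\Delta\psi}_2^{\,d\sigma/2}\,\norm{\psi}_2^{\,2\sigma+2-d\sigma/2},
$$
in which the exponent $d\sigma/2$ of $\norm{\Delta\psi}_2$ is forced by scaling: under $\psi\mapsto\psi(\lambda\,\cdot)$ one has $\norm{\Delta\psi}_2^2\sim\lambda^{4-d}$, $\norm{\psi}_2^2\sim\lambda^{-d}$ and $\norm{\psi}_{2\sigma+2}^{2\sigma+2}\sim\lambda^{-d}$, and matching powers of $\lambda$ on the two sides yields exactly this homogeneity. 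The crucial observation is that in the subcritical regime $\sigma d<4$ one has $d\sigma/2<2$.

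Next I would use conservation of the Hamiltonian to write
$$
  \norm{\Delta\psi(t)}_2^2
    = H(0) + \tfrac{1}{\sigma+1}\,\norm{\psi(t)}_{2\sigma+2}^{2\sigma+2},
$$
and insert the interpolation inequality together with $\norm{\psi(t)}_2^2=P(0)$. Writing $y(t)=\norm{\Delta\psi(t)}_2^2$, this produces a scalar inequality of the form $y\le H(0)+C'\,y^{\,d\sigma/4}$, where $C'$ depends only on $P(0)$ through the frozen factor $\norm{\psi}_2^{\,2\sigma+2-d\sigma/2}$. Because $d\sigma/4<1$, the right-hand side grows strictly sublinearly in $y$, so the inequality can hold only for $y$ below a finite threshold determined by $H(0)$ and $C'$; equivalently, a single application of Young's inequality absorbs the nonlinear term into $\tfrac12 y$ plus a constant. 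This yields the desired uniform-in-time bound $\norm{\Delta\psi(t)}_2\le C\bigl(P(0),H(0)\bigr)$, which contradicts blow-up and completes the proof.

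The argument is essentially mechanical once the interpolation inequality is in hand, and the only real point of care is the exponent bookkeeping. The main obstacle is therefore establishing the Gagliardo--Nirenberg estimate with the bare quantity $\norm{\Delta\psi}_2$ (rather than the full $\dot H^2$ seminorm or the collection of all second derivatives) as the controlling term, and confirming that subcriticality is precisely the condition $d\sigma/2<2$ that makes the absorption unconditional. In the critical case $\sigma d=4$ the exponent equals $2$ and the bound survives only below a ground-state threshold, while in the supercritical case treated in the remainder of this paper it fails outright — which is exactly why singular solutions become possible there.
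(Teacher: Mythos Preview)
The paper does not prove this theorem; it merely quotes it as a result of Fibich, Ilan and Papanicolaou~\cite{Fibich_Ilan_George_BNLS:2002}. Your argument is correct and is precisely the standard one used in that reference: conservation of power and Hamiltonian, combined with the Gagliardo--Nirenberg interpolation inequality, yields an a priori bound on $\norm{\Delta\psi}_2$ exactly when the exponent $d\sigma/2$ is strictly less than~$2$, i.e., in the subcritical regime $\sigma d<4$. Your stated ``obstacle'' about controlling the estimate with $\norm{\Delta\psi}_2$ rather than the full $\dot H^2$ seminorm is not a genuine issue, since Plancherel gives $\norm{\Delta\psi}_2^2=\int\abs{\xi}^4\abs{\hat\psi(\xi)}^2\,d\xi=\norm{\psi}_{\dot H^2}^2$ exactly.
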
 
\begin{thrm}    \label{thrm:GE_critical}
    Let $\psi_0 \in H^2$, and let $ \norm{\psi_0}_2^2 < \BPC$, where~$\BPC = \norm{\BR}_2^2$, and~$\BR$ is
    the ground state of
\begin{equation}    \label{eq:stationary_state_critical}
    -\Delta^2 \BR(\bvec{x}) - \BR + |\BR|^{8/d}\BR = 0.
\end{equation}
    Then, the solution of the critical BNLS~\eqref{eq:CBNLS} exists
    globally in~$H^2$.
\end{thrm}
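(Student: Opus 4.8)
The plan is to adapt Weinstein's variational argument for the $L^2$-critical NLS to the biharmonic setting. The engine is a sharp Gagliardo--Nirenberg inequality of the form
\begin{equation*}
    \norm{\psi}_{2\sigma+2}^{2\sigma+2} \le C_{\rm opt}\, \norm{\Delta\psi}_2^2\, \norm{\psi}_2^{2\sigma}, \qquad \sigma = \frac{4}{d},
\end{equation*}
whose exponents are forced by the $L^2$-critical scaling $\psi\mapsto\lambda^{d/2}\psi(\lambda\,\cdot)$, under which both sides transform identically. First I would show that the optimal constant $C_{\rm opt}$ is finite and attained, and that the extremizer is (a rescaling of) the ground state $\BR$ of~\eqref{eq:stationary_state_critical}; equivalently, that $\BR$ minimizes the Weinstein quotient $\norm{\Delta\psi}_2^2\,\norm{\psi}_2^{2\sigma}\big/\norm{\psi}_{2\sigma+2}^{2\sigma+2}$, so that $C_{\rm opt}$ is the reciprocal of this minimum.

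Next I would compute $C_{\rm opt}$ explicitly in terms of $\BPC = \norm{\BR}_2^2$ using two integral identities for $\BR$. Multiplying~\eqref{eq:stationary_state_critical} by $\BR$ and integrating gives $\norm{\BR}_{2\sigma+2}^{2\sigma+2} = \norm{\Delta\BR}_2^2 + \norm{\BR}_2^2$, while the Pohozaev identity obtained by testing against $\bvec{x}\cdot\nabla\BR$ (equivalently, by differentiating the action along the dilation $\BR(\lambda\,\cdot)$ at $\lambda=1$) yields a second linear relation among the same three quantities. Solving the pair gives $\norm{\Delta\BR}_2^2 = \tfrac{d}{4}\norm{\BR}_2^2$ and $\norm{\BR}_{2\sigma+2}^{2\sigma+2} = \tfrac{d+4}{4}\norm{\BR}_2^2$, whence
\begin{equation*}
    C_{\rm opt} = \frac{\norm{\BR}_{2\sigma+2}^{2\sigma+2}}{\norm{\Delta\BR}_2^2\,\norm{\BR}_2^{2\sigma}} = \frac{\sigma+1}{\BPC^{\,\sigma}} .
\end{equation*}

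With the sharp constant in hand, the conservation laws close the argument. Inserting the sharp inequality into the conserved Hamiltonian $H = \norm{\Delta\psi}_2^2 - \frac{1}{\sigma+1}\norm{\psi}_{2\sigma+2}^{2\sigma+2}$ and writing $P \equiv \norm{\psi_0}_2^2$ for the conserved power gives
\begin{equation*}
    H \ge \norm{\Delta\psi(t)}_2^2\left(1 - \left(\frac{P}{\BPC}\right)^{\!\sigma}\right).
\end{equation*}
Since $P < \BPC$, the bracketed factor is a strictly positive constant, so conservation of $H$ yields $\norm{\Delta\psi(t)}_2^2 \le H(0)\big/\bigl(1-(P/\BPC)^\sigma\bigr)$ uniformly in $t$. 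Together with the conserved $L^2$ norm this bounds $\norm{\psi(t)}_{H^2}$ uniformly on $[0,\TCrit)$, and the blow-up alternative furnished by the local well-posedness theory of Ben-Artzi, Koch and Saut then forces $\TCrit = \infty$.

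I expect the main obstacle to be the first step: proving existence of the extremizer and identifying it with $\BR$. Unlike the second-order problem, the biharmonic operator admits no maximum principle, so symmetrization and rearrangement techniques are unavailable and compactness of a minimizing sequence must instead be recovered through a concentration-compactness argument. Justifying the Pohozaev identity likewise requires controlling the integration by parts via adequate decay of $\BR$ and its derivatives. Granting the variational characterization of $\BR$ as the sharp extremizer (as implicitly used in the statement through the word ``ground state''), the remaining steps are routine.
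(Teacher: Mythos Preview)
The paper does not prove this theorem; it is quoted as a result of Fibich, Ilan and Papanicolaou~\cite{Fibich_Ilan_George_BNLS:2002}, so there is no in-paper proof to compare against. Your proposal is the correct approach and is precisely the Weinstein-type argument used in that reference: a sharp biharmonic Gagliardo--Nirenberg inequality with optimal constant $C_{\rm opt}=(\sigma+1)/\BPC^{\sigma}$, combined with conservation of power and Hamiltonian, yields the uniform $H^2$ bound and hence global existence by the blow-up alternative. Your Pohozaev computation and the resulting identities $\norm{\Delta\BR}_2^2=\tfrac{d}{4}\norm{\BR}_2^2$ and $\norm{\BR}_{2\sigma+2}^{2\sigma+2}=\tfrac{d+4}{4}\norm{\BR}_2^2$ are correct, and your caveat about the variational step (existence and identification of the extremizer without rearrangement tools) accurately reflects where the real analytic work lies.
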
 

\noindent
The simulations in~\cite{Fibich_Ilan_George_BNLS:2002} suggested that there
exist singular solutions for~$\sigma d=4$ and~$\sigma d>4$, and that these
singularities are of the blowup type, namely, the solution becomes infinitely
localized.
However, in contradistinction with NLS theory, there is currently no rigorous
proof that solutions of the BNLS can become singular in either the critical or
the supercritical case.

Most subsequent research of singular BNLS solutions focused on the critical case. 
 Chae, Hong and Lee~\cite{ChaeHongLee:2009}, showed that radial
singular solutions of~\eqref{eq:CBNLS} have a power-concentration property.
In~\cite{Baruch_Fibich_Mandelbaum:2009a}, we showed that radial singular solutions
are quasi self-similar.
We also proved, without assuming radial symmetry, that the blowup rate is bound by a
quartic-root, the power-concentration property and the existence of the
ground-state of~\eqref{eq:stationary_state_critical}.
The two latter properties were also proved by Zhu, Zhang and
Yang~\cite{ZhuZhangYang:2010}.
In~\cite{Baruch_Fibich_Mandelbaum:2009a}, we also provided informal
analysis and numerical evidence that peak-type singular solutions of
the critical BNLS collapse with a quasi self-similar $\BR$~profile 
at a blowup rate which is slightly faster than the quartic-root bound.

In this work, we use asymptotic analysis and numerics to find and characterize
peak-type singular solutions of the supercritical BNLS. We find that their properties mirror those of the supercritical NLS.
Ring-type singular solutions of the supercritical BNLS were studied in~\cite{Baruch_Fibich_Gavish:2009,Baruch_Fibich_Mandelbaum:2009b}.

\subsection{\label{ssec:intro_summary}Summary of results}

We analyze singular solutions of the focusing
$L^2$-supercritical and $H^2$-subcritical BNLS, i.e., when 
\begin{equation}    \label{eq:admissible-range-L+H}
    \begin{cases}
        4/d<\sigma & d\leq4,\\
        4/d<\sigma<\frac4{d-4} & d>4.
    \end{cases}
\end{equation} 
We assume radial symmetry, i.e., that~$\psi = \psi(t,r)$,
where $ r=|{\bvec x}|$.
In this case, equation~\eqref{eq:BNLS} reduces to 
\begin{equation}    \label{eq:radial_BNLS}
    i\psi_t(t,r) - \Delta^2_r\psi + \left|\psi\right|^{2\sigma}\psi = 0,
    \qquad \psi(0,r) = \psi_0(r),
\end{equation}
where
$$
    \Delta_r^2 = 
        \partial_r^4 
        +\frac{2(d-1)}{r}\partial_r^3
        +\frac{(d-1)(d-3)}{r^2}\partial_r^2
        -\frac{(d-1)(d-3)}{r^3}\partial_r
$$
is the radial biharmonic operator.

The paper is organized as follows.
In Section~\ref{sec:explicit}, we show that the supercritical BNLS admits 
the explicit self-similar singular solutions 
\begin{equation}    \label{eq:intro_SS_supercrit}
    \psi_\BS(t,r) =
        \frac{1}{L^{2/\sigma}(t)}
        \BS\left( \frac r{L(t)} \right)
        e^{i \nu \int \frac{1}{L^4(t')}dt'},
\end{equation} 
where the blowup rate of~$L(t)$ is exactly a quartic-root
$$
    L(t)=\kappa(\TCrit-t)^{1/4},    \qquad \kappa>0,
$$ 
and the self-similar profile~$\BS(\rho)$ is a solution of 
\begin{equation}
   \label{eq:S_BBB}
\begin{gathered}
    -\nu \BS(\rho) + i\frac{\kappa^4}4 \left(
        \frac{2}{\sigma}\BS + \rho \BS^\prime 
    \right)
    - \Delta_\rho^2 \BS + \abs{\BS}^{2\sigma}\BS = 0,
    \\
    \BS^\prime(0)=\BS^{\prime\prime\prime}(0)=0,
    \qquad
    \BS(\infty)=0.
\end{gathered}
\end{equation}
WKB analysis of the large-$\rho$ behavior of~$\BS$ shows that it belongs to~$L^{2+2\sigma}$,
but not to~$L^2$.  Since 
    $\lim_{t\to\TCrit} \norm{\psi_\BS}_{2+2\sigma} = \infty
$, $\psi_\BS$ is a singular solution in~$L^{2+2\sigma}$, but not in~$H^2$.
To the best of our knowledge, this is the first time that explicit singular solutions
of the BNLS are presented.

In Section~\ref{ssec:explicit_OH} we show that the zero-Hamiltonian solutions
of~\eqref{eq:S_BBB} satisfy the boundary condition
\[
    \lim_{\rho\to\infty}\left( 
        \rho S^\prime + \left( 
            \frac 2\sigma + i \frac{4\nu}{\kappa^4}
        \right)S
    \right)\rho^{\gamma}
    =0,
\qquad        
\frac23\left( d-2-\frac2\sigma \right) <\gamma<4+\frac2\sigma. 
\]
In analogy with the supercritical NLS, we conjecture that for any~$d$, $\sigma$
and~$\nu$, there is unique {\em admissible} solution~$\BS^{\rm admis.}(\rho)$, which has a
zero Hamiltonian and is monotonically decreasing. 
This solution is attained for a unique $\kappa = \kappa^{\rm admis.}(\sigma,d,\nu)>0$.
While a rigorous existence proof for the~$\BS$ profile remains open, we provide
numerical support for the existence of the admissible solutions.

In Section~\ref{sec:asymptotic} we consider $H^2$~singular solutions.
Using informal asymptotic analysis and the analogy with the supercritical NLS,
we conjecture that these solutions undergo a quasi self-similar collapse with the
$\psi_\BS$~profile, where~$\BS$ is the unique admissible solution~$\BS^{\rm admis.}$.
The blowup rate of these solutions is given by
$L(t) \sim \kappa^{\rm admis.} (\TCrit -t)^{1/4}$. 
These characteristics are confirmed numerically, in simulations of both the one-dimensional and the two-dimensional BNLS.

The numerical simulations of the BNLS were performed using the IGR/SGR 
method~\cite{Ren-00,SGR-08}, see~\cite{Baruch_Fibich_Mandelbaum:2009b} for further details.
The numerical solution of the nonlinear fourth-order ODE for~$\BS$ is obtained
using a modified Petviashvili (SLSR) method, which is described in the
appendix.
The code is available online at 
{\em http://www.math.tau.ac.il/$\sim$fibich/publications.html}

The results of this study are based on asymptotic analysis and numerical simulations,
but not on rigorous analysis.
These results show that there is a striking analogy between collapse of
peak type solutions in the supercritical NLS and the supercritical BNLS. 
We note that the rigorous theory for singular solutions of the supercritical NLS is
much less developed than that for the critical NLS.
Indeed, a rigorous proof of the blowup rate and blowup profile of the
supercritical NLS was obtained very recently, and only in the slightly-supercritical
regime $0<\sigma d-2 \ll 1$ \cite{Szeftel-09}.
We hope that this study will motivate a similar rigorous treatment of the supercritical
BNLS.

\section{Explicit singular solutions}
\label{sec:explicit}

Let us look for explicit self-similar solutions of the supercritical
BNLS~\eqref{eq:BNLS}.
Since the BNLS is invariant under the dilation symmetry~$
    r\mapsto \frac rL,
    t\mapsto \frac t{L^4},
    \psi\mapsto\frac{1}{L^{2/\sigma}}\psi
$, where~$L$ is a constant, this suggests a self-similar solution of the form
\begin{equation}    \label{eq:psi_SB}
    \psi_\BS(t,r) = 
        \frac1{L^{2/\sigma}(t)} 
        \BS\left( \rho \right)
        e^{i\tau(t)},
    \qquad  \rho = \frac r{L(t)}.
\end{equation}
Substituting~$\psi_\BS$ in the BNLS gives
\begin{equation}    \label{eq:psi_SB_pre_ODE}
    -\tau^\prime(t) L^4(t)\BS(\rho)
    -iL^3(t)L_t\left(
        \frac 2\sigma \BS
        + \rho\BS^\prime
    \right)
    - \Delta_{\rho}^2\BS
    +\abs{\BS}^{2\sigma}\BS = 0.
\end{equation}
Since~$\BS$ is only a function of~$\rho$,
equation~\eqref{eq:psi_SB_pre_ODE} must be independent of~$t$.
Therefore, there exists a real constant~$\kappa$ such that
\[
    L^3L_t \equiv\frac14\left( L^4 \right)_t \equiv - \kappa^4/4.
\]
Hence,~$L(t)$ is a quartic root, i.e.,
\begin{equation}    \label{eq:L_14}
    L(t) = \kappa \sqrt[4]{\TCrit-t},
    \qquad \kappa > 0.
\end{equation}
Likewise, since~$\BS$ is only a function of~$\rho$,
then~$\tau^\prime(t) L^{4}(t)\equiv \nu$.
Hence,
\begin{equation}    \label{eq:tau}
    \tau(t) = \nu \int_{s=0}^{t}\frac{1}{L^4(s)}ds
    = - \frac{\nu}{\kappa^4} \ln\left( 1-\frac t\TCrit \right) .
\end{equation}

    Substituting~\eqref{eq:L_14} and~\eqref{eq:tau}
    in~\eqref{eq:psi_SB_pre_ODE} shows that the equation for~$\BS$ is
\begin{subequations}    
   \label{eqs:explicit_ODE}
    \begin{equation}    \label{eq:explicit_ODE_profile}
        -\nu \BS(\rho) + i\frac{\kappa^4}4 \left(
            \frac{2}{\sigma}\BS + \rho \BS^\prime 
        \right)
        - \Delta_\rho^2 \BS + \abs{\BS}^{2\sigma}\BS = 0.
    \end{equation}
       Since~$\BS$ is  radially-symmetric and decays at infinity, it should satisfy the boundary conditions
        \begin{equation}    \label{eq:explicit_ODE_BCs}
            \BS^\prime(0)=\BS^{\prime\prime\prime}(0)=0,
            \qquad
            \BS(\infty)=0.
        \end{equation}
\end{subequations}

Equation~\eqref{eqs:explicit_ODE} has the two parameters~$\nu$ and~$\kappa$.
Note, however, that 
 \begin{equation}   \label{eq:Sb_tildeSb}
    \BS(\rho; \kappa,\nu):= 
    \nu^{\frac1{2\sigma}} \tilde{\BS}(
            \nu^{1/4} \rho; \tilde{\kappa} =\kappa/\nu^{1/4}
        ),
    \end{equation}
where $\tilde{\BS}(\rho; \tilde{\kappa})$ is the solution of~\eqref{eqs:explicit_ODE} with $\nu=1$, i.e.,  
\begin{subequations}    
   \label{eqs:explicit_ODE_nu=1}
    \begin{equation}    \label{eq:explicit_ODE_profile-nu=1}
        - \tilde{\BS}(\rho) + i\frac{\kappa^4}4 \left(
            \frac{2}{\sigma}\tilde{\BS} + \rho \tilde{\BS}^\prime 
        \right)
        - \Delta_\rho^2 \tilde{\BS} + \abs{\tilde{\BS}}^{2\sigma}\tilde{\BS} = 0.
    \end{equation}
subject to
        \begin{equation}    \label{eq:explicit_ODE_nu=1_BCs}
            \tilde{\BS}^\prime(0)=\tilde{\BS}^{\prime\prime\prime}(0)=0,
            \qquad
            \tilde{\BS}(\infty)=0.
        \end{equation}
\end{subequations}

Equation~\eqref{eqs:explicit_ODE_nu=1} can be viewed as a nonlinear eigenvalue problem
with the eigenvalue~$\kappa$ and eigenfunction~$\BS$.
By analogy with the supercritical NLS~\cite{Koppel-95,Budd-01},
we make the following conjecture:
    \begin{conj}    \label{conj:explicit}
        Let $\sigma$  be in the $L^2$~supercritical and~$H^2$-subcritical regime~\eqref{eq:admissible-range-L+H}.
        Then, there exists a solution~$\left\{ 
        \tilde{\BS}(\rho),\tilde{\kappa} \right\}$ to 
        equation~\eqref{eqs:explicit_ODE_nu=1}, such that~$\tilde{\BS} \not\equiv 0$ and~$\tilde\kappa>0$.
    \end{conj}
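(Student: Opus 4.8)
The plan is to construct $\{\tilde{\BS},\tilde{\kappa}\}$ by continuation from the exponentially-localized limit $\tilde{\kappa}=0$. At $\tilde{\kappa}=0$ the profile equation \eqref{eqs:explicit_ODE_nu=1} reduces to the real standing-wave equation $-\Delta_\rho^2\tilde{\BS}-\tilde{\BS}+|\tilde{\BS}|^{2\sigma}\tilde{\BS}=0$, which coincides with the ground-state equation \eqref{eq:stationary_state_critical} in the critical case and is its supercritical analogue otherwise. Since the regime \eqref{eq:admissible-range-L+H} is $H^2$-subcritical, the embedding $H^2\hookrightarrow L^{2\sigma+2}$ holds and is compact on radial functions (Strauss' lemma), so a nontrivial ground state $\BR$ exists by constrained minimization or a mountain-pass argument, and it decays exponentially. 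This $\BR$, together with $\tilde{\kappa}=0$, is the anchor of the continuation; by the discussion in Section~\ref{ssec:intro_summary}, slight supercriticality $0<\sigma d-4\ll1$ should correspond to small $\tilde{\kappa}$. Already at this stage one subtlety of the fourth-order problem appears: positivity and monotonicity of $\BR$ are not automatic, since $\Delta^2$ lacks a maximum principle.

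First I would set up the continuation in $\tilde{\kappa}$. The central difficulty, and the reason the statement is only conjectural, is that the term $i\frac{\kappa^4}{4}\bigl(\frac{2}{\sigma}\tilde{\BS}+\rho\tilde{\BS}^\prime\bigr)$ is a \emph{singular} perturbation: it is unbounded in $\rho$ and forces an algebraic, complex-valued tail, in sharp contrast with the exponential decay at $\tilde{\kappa}=0$. A naive implicit-function continuation in a fixed weighted space therefore breaks down. Instead I would use matched asymptotic expansions: an inner region $\rho=O(1)$, where $\tilde{\BS}=\BR+o(1)$ is essentially real and exponentially localized; and a far region where the advection term is no longer negligible and enforces the oscillatory, algebraically-decaying tail $\tilde{\BS}\sim\rho^{-2/\sigma}\exp\bigl(-i\tfrac{4}{\kappa^4}\ln\rho\bigr)$ of Section~\ref{ssec:explicit_OH}. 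The two expansions are matched in an overlap region, and it is the matching condition that quantizes $\tilde{\kappa}$.

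Because $\tilde{\kappa}$ is an eigenvalue rather than a free parameter, I would make the selection rigorous through a shooting argument. The origin conditions $\tilde{\BS}^\prime(0)=\tilde{\BS}^{\prime\prime\prime}(0)=0$ leave the complex data $\tilde{\BS}(0),\tilde{\BS}^{\prime\prime}(0)$ free, which together with $\tilde{\kappa}$ parametrize a finite-dimensional shooting manifold; splitting the linearized flow at infinity into its decaying and growing modes, the vanishing of the coefficients of the growing modes defines an admissibility map whose zero is the sought solution. For small $\tilde{\kappa}$ this map is a perturbation of the nondegenerate ground-state configuration, so I would solve it by a Lyapunov--Schmidt reduction or a topological-degree argument, invoking the nondegeneracy (modulo the phase symmetry) of the linearization of the $\tilde{\kappa}=0$ operator.

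The hard part will be controlling the non-self-adjoint linearized operator, schematically $\mathcal{L}=-\Delta_\rho^2-1+i\frac{\kappa^4}{4}\bigl(\frac{2}{\sigma}+\rho\partial_\rho\bigr)+V(\BR)$, uniformly out to the algebraic tail. The skew term $i\rho\partial_\rho$ destroys self-adjointness and the usual Fredholm and spectral machinery, so proving invertibility on a weighted space compatible with that tail --- and excluding resonances as $\tilde{\kappa}\to0$ --- is the crux; it further hinges on nondegeneracy of the biharmonic ground state, itself a delicate point. I expect this program to close only in the slightly-supercritical regime, mirroring the NLS analysis of~\cite{Szeftel-09}, whereas the full range \eqref{eq:admissible-range-L+H} would require a global continuation in $\sigma$ or a genuinely non-perturbative argument --- which is why the result is posed as Conjecture~\ref{conj:explicit}.
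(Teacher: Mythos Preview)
The paper does not prove Conjecture~\ref{conj:explicit}; it is stated precisely as a conjecture, motivated solely by analogy with the supercritical NLS (references~\cite{Koppel-95,Budd-01}) and supported later by numerical computations of~$\BS$ via the Petviashvili-type scheme of Appendix~\ref{sec:BS_numerics}. There is therefore no ``paper's own proof'' to compare your proposal against.

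Your outline is a reasonable sketch of how one might attempt a rigorous proof, and it correctly identifies the main obstructions: the singular, non-self-adjoint perturbation $i\rho\partial_\rho$ that destroys the exponential decay and the variational structure available at $\tilde\kappa=0$; the need for a weighted Fredholm theory compatible with the algebraic tail~$\rho^{-2/\sigma}$; and the unproven nondegeneracy of the biharmonic ground state~$\BR$. You are also right that the only regime in which such a continuation program has been carried out rigorously even for the NLS is the slightly-supercritical one~\cite{Szeftel-09}, and you honestly flag that your argument would, at best, yield existence for $0<\sigma d-4\ll1$ rather than the full range~\eqref{eq:admissible-range-L+H}. One point to be careful about: the matching/shooting you describe selects $\tilde\kappa$ as an eigenvalue, but at $\tilde\kappa=0$ the anchor $\BR$ exists for a continuum of problems (any $\sigma,d$), so the bifurcation is from a line of trivial solutions and the leading-order selection of $\tilde\kappa$ requires going beyond the linearization --- in the NLS case this is where the zero-Hamiltonian condition enters nontrivially, and you should expect the same here (cf.\ Section~\ref{ssec:explicit_OH}). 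In short, your proposal is a plausible research program, not a proof, and the paper makes no stronger claim.
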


Hence, we have the following result:
\begin{lem} \label{lem:explicit}
    Assume that Conjecture~\ref{conj:explicit} holds, and let~$\BS(\rho;\kappa,\nu)$ be a nontrivial solution of~\eqref{eqs:explicit_ODE}.
    Then,
    \begin{equation}    \label{eq:psiBS}
        \psi_\BS(t,r) = 
        \frac1{L^{2/\sigma}(t)} 
        \BS\left( \frac r{L(t)} \right)
        e^{i \nu \int^{t}\frac{1}{L^4(s)}ds},
        \qquad  L(t)=\kappa\sqrt[4]{\TCrit-t},
    \end{equation}
    is an explicit solution of the BNLS equation~\eqref{eq:BNLS}.
\end{lem}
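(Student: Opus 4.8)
The plan is to verify directly that the assembled $\psi_\BS$ satisfies the BNLS~\eqref{eq:BNLS}, thereby establishing the converse of the derivation that precedes the lemma. That derivation extracts necessary conditions: any solution of the self-similar form~\eqref{eq:psi_SB} forces the two constancy relations $L^3 L_t \equiv -\kappa^4/4$ and $\tau^\prime L^4 \equiv \nu$, which in turn fix $L(t)=\kappa\sqrt[4]{\TCrit-t}$ as in~\eqref{eq:L_14} and $\tau(t)$ as in~\eqref{eq:tau}. What remains for the lemma is the reverse implication: starting from a $\BS$ that solves the profile ODE~\eqref{eqs:explicit_ODE}, together with these explicit $L(t)$ and $\tau(t)$, confirm that the resulting $\psi_\BS$ is a genuine solution of the PDE.

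The one computation carrying real content is the scaling behavior of the radial biharmonic operator. The first step is to record that $\Delta_r^2$ is homogeneous of degree $-4$ under the dilation $r\mapsto r/L$, i.e. that for any profile~$f$ one has $\Delta_r^2\!\left[f(r/L)\right](r)=L^{-4}\,(\Delta_\rho^2 f)(r/L)$ with $\rho=r/L$. This is checked term-by-term: $\partial_r^4 f(r/L)=L^{-4}f^{(4)}(\rho)$, while the lower-order terms $\tfrac{2(d-1)}{r}\partial_r^3$, $\tfrac{(d-1)(d-3)}{r^2}\partial_r^2$, and $-\tfrac{(d-1)(d-3)}{r^3}\partial_r$ each pick up a compensating power of $r=L\rho$ in the denominator, so that every summand scales as $L^{-4}$. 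This is precisely the homogeneity encoded in the stated dilation symmetry $(r,t,\psi)\mapsto(r/L,\,t/L^4,\,L^{-2/\sigma}\psi)$. With the same bookkeeping, $|\psi_\BS|^{2\sigma}\psi_\BS=e^{i\tau}L^{-2/\sigma}L^{-4}|\BS|^{2\sigma}\BS$, so the nonlinear term carries the identical $L^{-4}$ factor.

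The next step is to differentiate $\psi_\BS=L^{-2/\sigma}(t)\,\BS(\rho)\,e^{i\tau(t)}$ in $t$ by the chain rule, using $\partial_t L^{-2/\sigma}=-\tfrac{2}{\sigma}L^{-2/\sigma-1}L_t$ and $\rho_t=-\rho L_t/L$. Assembling $i\psi_t-\Delta^2\psi_\BS+|\psi_\BS|^{2\sigma}\psi_\BS$, the common factor $e^{i\tau}L^{-2/\sigma}$ divides out; multiplying through by $L^4$ then collapses the expression exactly to~\eqref{eq:psi_SB_pre_ODE}. Inserting the two constancy relations $L^3L_t\equiv-\kappa^4/4$ and $\tau^\prime L^4\equiv\nu$ reproduces the profile ODE~\eqref{eq:explicit_ODE_profile} verbatim, whose left-hand side vanishes by the hypothesis that $\BS$ solves~\eqref{eqs:explicit_ODE}. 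I would also note that the boundary conditions $\BS^\prime(0)=\BS^{\prime\prime\prime}(0)=0$ guarantee regularity of $\psi_\BS$ at $r=0$ (so that the singular $1/r$, $1/r^2$, $1/r^3$ coefficients in $\Delta_r^2$ produce no spurious singularity), while $\BS(\infty)=0$ is the decay condition consistent with the radial localized ansatz.

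I do not anticipate a genuine obstacle here: the homogeneity of $\Delta_r^2$ is the only substantive ingredient, and everything else is a finite chain-rule verification. The role of Conjecture~\ref{conj:explicit} is solely to supply a nontrivial pair $\{\tilde\BS,\tilde\kappa\}$, from which the scaling relation~\eqref{eq:Sb_tildeSb} furnishes $\BS(\rho;\kappa,\nu)$ for arbitrary $\nu>0$; given any such $\BS$, the construction of $\psi_\BS$ is wholly explicit and the claim reduces to the direct substitution above. The statement is therefore a verification of solvability at the level of the PDE, and makes no assertion about $H^2$ regularity (indeed the WKB analysis shows $\psi_\BS\notin L^2$).
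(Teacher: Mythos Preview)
Your proposal is correct and follows the same approach as the paper: the paper does not give a separate proof of this lemma but presents it as an immediate consequence of the derivation in~\eqref{eq:psi_SB}--\eqref{eqs:explicit_ODE}, which is precisely the substitution computation you spell out in detail. Your explicit check of the degree-$(-4)$ homogeneity of $\Delta_r^2$ and the chain-rule bookkeeping simply make rigorous what the paper's derivation treats as self-evident from the dilation symmetry.
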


As $\rho\to\infty$, the nonlinear term in~\eqref{eqs:explicit_ODE} becomes
negligible, and~\eqref{eq:explicit_ODE_profile} reduces to
\begin{equation}    \label{eq:SC_WKB_profile}
    -\nu \BSlin(\rho) 
    -\Delta_\rho \BSlin 
    + i\frac {\kappa^4}4 \left(
        \frac{2}{\sigma}\BSlin + \rho (\BSlin)_\rho 
    \right)
     = 0,
\end{equation} where
\[
    \Delta_{\rho}^2 = 
    -\frac{(d-1)(d-3)}{\rho^3}\partial_{\rho}
        +\frac{(d-1)(d-3)}{\rho^2}\partial_{\rho}^2
        +\frac{2(d-1)}{\rho}\partial_{\rho}^3
        +\partial_{\rho}^4 ~ .
\]

We now use WKB to find the large $\rho$ behavior of~\eqref{eq:SC_WKB_profile}:
\begin{lem}
Let~$\BSlin(\rho)$ be a solution of~\eqref{eq:SC_WKB_profile}.
Then, 
 \[
    \BSlin\sim 
        c_1\BS_{,1}(\rho)
        +c_2\BS_{,2}(\rho)
        +c_3\BS_{,3}(\rho)
        +c_4\BS_{,4}(\rho)
        ,\qquad
    \rho\to\infty,
\]
where~$\left\{ c_i \right\}_{i=1}^4$ are complex constants,
\begin{eqnarray*} 
    \BS_{,1}(\rho) &\sim &
        \rho^{-\frac 2\sigma -i\frac{4\nu}{\kappa^4}}, \\
    \BS_{,2}(\rho) &\sim&
        \frac{1}{\rho^{\frac{2}{3\sigma}(\sigma d-1)}}
        \exp\left( 
            -i {\frac3{4\sqrt[3]{4}}} 
                (\kappa\rho)^{4/3}
                +i\frac{4\nu}{3\kappa^4} \log(\rho)
        \right), \\
    \BS_{,3}(\rho) &\sim&
        \frac{
            \exp\left( + ~
                    \frac{3\sqrt{3}}{8\sqrt[3]{4}} (\kappa\rho)^{4/3}
            \right)
        }{\rho^{\frac{2}{3\sigma}(\sigma d-1)}}
        \exp\left( 
                +i\frac{3\sqrt{3}}{8\sqrt[3]{4}} (\kappa\rho)^{4/3}
                +i\frac{4\nu}{3\kappa^4} \log(\rho)
        \right), \\
    \BS_{,4}(\rho) &\sim&
        \frac{
            \exp\left( - ~
                    \frac{3\sqrt{3}}{8\sqrt[3]{4}} (\kappa\rho)^{4/3}
            \right)
        }{\rho^{\frac{2}{3\sigma}(\sigma d-1)}}
        \exp\left( 
                +i\frac{3\sqrt{3}}{8\sqrt[3]{4}} (\kappa\rho)^{4/3}
                +i\frac{4\nu}{3\kappa^4} \log(\rho)
        \right).
\end{eqnarray*}
\end{lem}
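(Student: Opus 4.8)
The plan is to treat \eqref{eq:SC_WKB_profile} as a fourth-order linear ODE with polynomially growing coefficients and to build its four-dimensional solution space by the Liouville--Green (WKB) method. I would substitute the logarithmic ansatz $\BSlin = \exp\left(\int^{\rho}\theta(s)\,ds\right)$, so that $\BSlin'/\BSlin = \theta$, $\BSlin''/\BSlin = \theta^2+\theta'$, and so on, which turns \eqref{eq:SC_WKB_profile} into a single nonlinear (Riccati-type) equation for $\theta(\rho)$. As $\rho\to\infty$ the only terms that grow are the highest derivative $\partial_\rho^4$ and the advection term $i\frac{\kappa^4}{4}\rho\,\partial_\rho$, so the dominant balance is $\theta^4 \sim i\frac{\kappa^4}{4}\rho\,\theta$. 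This already splits the solution space into one slowly varying (algebraic) branch and three rapidly varying (exponential) branches.

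At leading order the eikonal equation $\theta_0^4 = i\frac{\kappa^4}{4}\rho\,\theta_0$ has the three nonzero roots solving the cubic $\theta_0^3 = i\frac{\kappa^4}{4}\rho$, namely $\theta_0 = \left(\frac{\kappa^4}{4}\right)^{1/3}\omega\,\rho^{1/3}$ with $\omega^3 = i$. Integrating gives the leading exponentials $\exp\left(\frac{3}{4\sqrt[3]{4}}(\kappa\rho)^{4/3}\,\omega\right)$: the root $\omega=-i$ yields the purely oscillatory $\BS_{,2}$, while $\omega=e^{i\pi/6}$ (positive real part) and $\omega=e^{i5\pi/6}$ (negative real part) yield the growing $\BS_{,3}$ and decaying $\BS_{,4}$ respectively. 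The degenerate root $\theta_0\approx 0$ produces $\BS_{,1}$; here the biharmonic contributions are negligible and the balance is between the algebraic terms $-\nu\BSlin + i\frac{\kappa^4}{4}\left(\frac2\sigma\BSlin + \rho\BSlin'\right)$. Seeking $\BSlin\sim\rho^\mu$ gives $-\nu + i\frac{\kappa^4}{4}\left(\frac2\sigma+\mu\right)=0$, i.e. $\mu = -\frac2\sigma - i\frac{4\nu}{\kappa^4}$, the claimed exponent of $\BS_{,1}$.

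To recover the algebraic prefactors and logarithmic phase corrections of $\BS_{,2}$, $\BS_{,3}$, $\BS_{,4}$ I would push the expansion one order further, writing $\theta = \theta_0 + \theta_1$ with $\theta_1 = O(\rho^{-1})$. Linearizing the Riccati equation about $\theta_0$ and using $\theta_0^3 = i\frac{\kappa^4}{4}\rho$ to simplify, the coefficient of $\theta_1$ collapses to $3\theta_0^3$, and collecting the remaining terms of order $\rho^0$ (those coming from $6\theta_0^2\theta_0'$, from the subdominant piece $\frac{2(d-1)}{\rho}\theta_0^3$ of the biharmonic operator, and from the constants $i\frac{\kappa^4}{2\sigma}$ and $-\nu$) gives the closed expression $\theta_1 = \frac1\rho\left[-\frac{2}{3\sigma}(\sigma d-1) + i\frac{4\nu}{3\kappa^4}\right]$. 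Integrating $\theta_1$ reproduces exactly the prefactor $\rho^{-\frac{2}{3\sigma}(\sigma d-1)}$ and the phase $\exp\left(i\frac{4\nu}{3\kappa^4}\log\rho\right)$ shared by the three exponential branches. Together with $\BS_{,1}$ these four solutions span the solution space.

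The main obstacle is rigor rather than the formal computation: one must show that the formal WKB series is a genuine asymptotic expansion of honest solutions and that the four branches are linearly independent at infinity. Since $\rho\to\infty$ is an irregular singular point, I would recast \eqref{eq:SC_WKB_profile} in Liouville--Green normal form and control the remainder by an Olver-type error bound, or equivalently by a contraction-mapping/successive-approximation argument on the associated integral equation, valid on a sector where each exponent has a definite sign of its real part. The degenerate branch needs separate handling, since the balance defining $\BS_{,1}$ is algebraic rather than WKB; I would establish it by a direct fixed-point argument for $\BSlin = \rho^\mu\bigl(1+o(1)\bigr)$, verifying that the neglected biharmonic terms are indeed $O(\rho^{\mu-4})$ and hence subdominant.
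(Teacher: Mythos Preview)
Your approach is essentially the same as the paper's: both use the WKB ansatz $\BSlin=\exp(w)$ (your $\theta$ is just $w'$), obtain the leading cubic $(\theta_0)^3=i\frac{\kappa^4}{4}\rho$ for the three exponential branches and a separate logarithmic ansatz $w_0=\beta\log\rho$ for the algebraic branch, and then read off the $\rho^{-\frac{2}{3\sigma}(\sigma d-1)}e^{i\frac{4\nu}{3\kappa^4}\log\rho}$ correction at the next order. The paper, however, treats this lemma as a purely formal WKB computation and does not attempt the Olver-type error control or the fixed-point argument for the degenerate branch that you outline; your rigor discussion goes beyond what the paper provides, but the underlying derivation is identical.
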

\begin{proof}

In order to apply the WKB method, we substitute~$\BSlin(\rho) = \exp(w(\rho))$,
and expand
\[
    w(\rho) \sim w_0(\rho) + w_1(\rho) + \dots\; .
\]
Substituting~$w_0(\rho) = \alpha \rho^p$ and balancing terms shows that~$p=4/3$,
and that the equation for the leading-order, 
the~$\mathcal{O}\left( \rho^{4/3} \right)$ terms, is 
\[
    \left( w_0^\prime \right)^3 
        = \left( \frac{4\alpha}{3} \right)^3 \rho
        = i \frac {\kappa^4}4 \rho.
\] 
Therefore,
\[
    \alpha = \frac{3}{4} \sqrt[3]{i \frac{\kappa^4}4}
    =
    \frac{3\kappa^{4/3}}{4\sqrt[3]{4}}
    \cdot
    \left\{
        -i ,
        \frac{\sqrt{3}+i}2 ,
        \frac{-\sqrt{3}+i}2 
    \right\} .
\]
The equation for the next order, the~$\mathcal{O}\left( 1 \right)$ terms, is \[
    \nu +i\frac {\kappa^4}2d 
    = i\frac {\kappa^4}4 \left( \frac 2\sigma + 3\rho w_1^\prime \right), 
\] implying that \[
    w_1 = \frac 13 \left( \frac 2\sigma(1-\sigma d)
    + \frac{4\nu}{\kappa^4}i \right) \log \rho .
\]
The next-order terms are~$\mathcal{O}\left( \rho^{-4/3} \right)=o(1)$ and can be
neglected.
We therefore obtain the three solutions  $\BS_{,2}$, $\BS_{,3}$,  and $\BS_{,4}$.

    Since~\eqref{eq:SC_WKB_profile} is a fourth order ODE, another solution is
    required.
    To obtain the fourth solution, we substitute~$w_0\sim \beta\log(\rho)$  
    in~\eqref{eq:SC_WKB_profile} and obtain that the equation for the
    leading-order, the~$\mathcal{O}(1)$ terms, is \[
    -\nu +i\frac {\kappa^4}4 \left( \frac 2\sigma+\beta \right) = 0 ,
    \]
    and that the next-order terms 
    are~$\mathcal{O}\left( \rho^{-4} \right)=o(1)$ and can be
neglected.
    The fourth solution is therefore $ \BS_{,1}$.

\end{proof}

Equation~\eqref{eq:SC_WKB_profile} thus has the two algebraically-decaying
solutions, $\BS_{,1}$ and~$\BS_{,2}$, the exponentially-increasing
solution~$\BS_{,3}$, and the exponentially-decreasing solution~$\BS_{,4}$.
The fact that~$\BS_{,3}$ increases exponentially as~$\rho\to\infty$ is
inconsistent with the boundary condition~\eqref{eq:explicit_ODE_BCs}.
Therefore, 
\begin{equation}    \label{eq:noBS3}
    \BS(\rho)\sim
    c_1\BS_{,1}(\rho)
    +c_2\BS_{,2}(\rho)
    +c_4\BS_{,4}(\rho).
\end{equation}
Since~$\sigma d>4$, the exponent~$
    \frac{2}{3\sigma}(\sigma d-1)
$ of~$\BS_{,2}$ is larger that the exponent~$\frac 2\sigma$ of~$\BS_{,1}$, 
hence
\begin{equation}    \label{eq:BS1_gtr_BS2}
    \BS_{,1}\gg \BS_{,2},
    \qquad \rho\to \infty.
\end{equation}.

Direct calculations give that
\begin{subequations}    \label{eqs:SB_spaces}
    \ifx \useAPS \undefined 
        \begin{align}
               \BS_{,1}\not\in L^2( \Real^d ),
            &&   \Delta\BS_{,1}\in L^2( \Real^d ),
            &&   \BS_{,1}\in L^{2+2\sigma}( \Real^d ), \\
               \BS_{,2}\in L^2( \Real^d ),
            &&   \Delta\BS_{,2}\not\in L^2( \Real^d ),
            &&   \BS_{,2}\in L^{2+2\sigma}( \Real^d ), \\
               \BS_{,4}\in L^2( \Real^d ),
            &&   \Delta\BS_{,4}\in L^2( \Real^d ),
            &&   \BS_{,4}\in L^{2+2\sigma}( \Real^d ).
        \end{align}
    \else 
        $$
        \begin{array}{ccc} \displaystyle
               \BS_{,1}\not\in L^2( \Real^d ),
            &   \Delta\BS_{,1}\in L^2( \Real^d ),
            &   \BS_{,1}\in L^{2+2\sigma}( \Real^d ), \\
               \BS_{,2}\in L^2( \Real^d ),
            &   \Delta\BS_{,2}\not\in L^2( \Real^d ),
            &   \BS_{,2}\in L^{2+2\sigma}( \Real^d ), \\
               \BS_{,4}\in L^2( \Real^d ),
            &   \Delta\BS_{,4}\in L^2( \Real^d ),
            &   \BS_{,4}\in L^{2+2\sigma}( \Real^d ).
        \end{array}
        $$
    \fi
\end{subequations}
Therefore, $\BS$~is in~$L^{2+2\sigma}$. Unless $c_1=c_2=0$, however, 
  $\BS$~is not in~$H^2$.
Furthermore, since
\[
    \norm{\psi_{\BS}}_{2+2\sigma}^{2+2\sigma}
    = \frac1{L^{4/\sigma-(d-4)}(t)} \norm{\BS}_{2+2\sigma}^{2+2\sigma},
\]
then~$\psi_\BS\in L^{2+2\sigma}$ for~$0\le t<\TCrit$.
In the~$H^2$-subcritical regime $4/\sigma -(d-4)>0$.
Therefore, \[
    \lim_{t\to\TCrit} \norm{\psi_\BS}_{2+2\sigma} =\infty.
\]
Hence,
\begin{lem}
Assume that Conjecture~\ref{conj:explicit} holds. Then, 
$\psi_\BS$ is an explicit solution of the BNLS equation~\eqref{eq:BNLS}
that becomes singular in~$L^{2+2\sigma}$ as~$t\to\TCrit$.
\end{lem}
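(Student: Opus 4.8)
The plan is to assemble the statement from two ingredients that are already in place: the existence of a nontrivial profile, supplied by Conjecture~\ref{conj:explicit}, and the norm computation carried out just above. I would organize the argument in three steps, with the integrability of $\BS$ as the one substantive point.

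First I would invoke Conjecture~\ref{conj:explicit} to obtain a nontrivial pair $\{\tilde{\BS},\tilde\kappa\}$ solving the $\nu=1$ problem~\eqref{eqs:explicit_ODE_nu=1} with $\tilde\kappa>0$, and then use the rescaling~\eqref{eq:Sb_tildeSb} with $\kappa=\tilde\kappa\,\nu^{1/4}>0$ to produce, for the prescribed $\nu$, a nontrivial solution $\BS(\rho;\kappa,\nu)$ of the general profile equation~\eqref{eqs:explicit_ODE}. With such a $\BS$ in hand, Lemma~\ref{lem:explicit} gives directly that $\psi_\BS$ of~\eqref{eq:psiBS} solves the BNLS~\eqref{eq:BNLS}; this settles the first assertion.

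Second, I would establish the spatial integrability $\BS\in L^{2+2\sigma}(\Real^d)$. The route is the large-$\rho$ WKB asymptotics of the preceding lemma: the admissible decaying profile is a combination $\BS\sim c_1\BS_{,1}+c_2\BS_{,2}+c_4\BS_{,4}$ of the two algebraically decaying modes and the exponentially decaying mode, the exponentially growing mode $\BS_{,3}$ being excluded (i.e.\ $c_3=0$) by the far-field condition $\BS(\infty)=0$ in~\eqref{eq:explicit_ODE_BCs}. Reading off the exponents recorded in~\eqref{eqs:SB_spaces}, each surviving mode lies in $L^{2+2\sigma}$ (while $\BS_{,1}\notin L^2$), so $\BS\in L^{2+2\sigma}$. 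Third, I would transfer this to $\psi_\BS$ through the scaling identity
\[
  \norm{\psi_\BS}_{2+2\sigma}^{2+2\sigma}
  = L^{-(4/\sigma-(d-4))}(t)\,\norm{\BS}_{2+2\sigma}^{2+2\sigma},
\]
which follows from the change of variables $\rho=r/L(t)$ and the prefactor $L^{-2/\sigma}$ in~\eqref{eq:psi_SB}. In the $L^2$-supercritical, $H^2$-subcritical regime~\eqref{eq:admissible-range-L+H} one has $4/\sigma-(d-4)>0$, so since $L(t)=\kappa(\TCrit-t)^{1/4}\to0$ as $t\to\TCrit$, the right-hand side diverges; hence $\norm{\psi_\BS}_{2+2\sigma}\to\infty$, which is precisely the claimed singularity in $L^{2+2\sigma}$.

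The only genuinely delicate point is the integrability claim $\BS\in L^{2+2\sigma}$, because it rests on the \emph{formal} WKB expansion rather than on a rigorous asymptotic theorem: it presumes that the nonlinear term is truly negligible at infinity and that the leading behavior of $\BS$ is captured by the three decaying linear modes with $c_3=0$. Making this rigorous would require a genuine asymptotic argument near $\rho=\infty$ (for instance a variation-of-parameters/fixed-point estimate controlling the nonlinear correction against the linear modes), which is outside the scope of the present informal treatment. Within that treatment, however, the membership $\BS\in L^{2+2\sigma}$ is taken from the WKB computation, and the remaining steps are routine.
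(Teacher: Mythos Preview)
Your proposal is correct and follows essentially the same route as the paper: the paper's ``proof'' is the text immediately preceding the lemma, which (i) invokes Lemma~\ref{lem:explicit} for the solution property, (ii) uses the WKB asymptotics and the table~\eqref{eqs:SB_spaces} together with $c_3=0$ to place $\BS$ in $L^{2+2\sigma}$, and (iii) applies the scaling identity for $\norm{\psi_\BS}_{2+2\sigma}$ and the $H^2$-subcritical inequality $4/\sigma-(d-4)>0$ to obtain the blowup. Your caveat that the integrability rests on the formal WKB expansion is exactly the level of rigor the paper itself adopts.
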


\subsection{Zero-Hamiltonian solutions}
\label{ssec:explicit_OH}

As is the case of peak-type solutions of the supercritical NLS, a key role is played by the zero-Hamiltonian solutions.
\begin{thrm}    \label{thrm:H_of_SB}
     Let $\sigma$  be in the $L^2$~supercritical and~$H^2$-subcritical regime~\eqref{eq:admissible-range-L+H}.
    Let~$\BS$ be a solution of~\eqref{eqs:explicit_ODE}.
    If~$H\left[ \BS \right]<\infty$, then~$H\left[ \BS \right]=0$.
    \begin{proof}
        The Hamiltonian of~$\psi_\BS$, see~\eqref{eq:psiBS}, is equal to
        \begin{equation*}
            H\left[ \psi_\BS \right]
                = \frac1{L^{4/\sigma-(d-4)}} H \left[ \BS \right].
        \end{equation*}
        From $H^2$-subcriticality, it follows
        that~$L^{-4/\sigma-4+d}(t)\neq{\it const}$.
        Therefore, Hamiltonian conservation~($
            H\left[ \psi_\BS \right]\equiv {\it const}
        $) implies that~$H\left[ \BS \right]=0$.
    \end{proof}
\end{thrm}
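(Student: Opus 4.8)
The plan is to exploit the fact that the self-similar ansatz $\psi_\BS$ is manufactured from the dilation symmetry of the BNLS, under which the Hamiltonian carries a single definite scaling weight. Concretely, I would show that evaluating $H$ on $\psi_\BS(t,\cdot)$ produces a clean factorization
$$
    H[\psi_\BS] = L^{-(4/\sigma-(d-4))}(t)\, H[\BS],
$$
in which the entire time dependence is isolated in an explicit power of $L(t)$, while the profile contributes only the fixed number $H[\BS]$. Once this is in hand, the theorem follows by pitting this time dependence against the conservation law $H[\psi_\BS(t)]\equiv H[\psi_\BS(0)]$.

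First I would compute the scaling of each of the two pieces of $H$ separately, substituting $\psi_\BS = L^{-2/\sigma}\BS(r/L)e^{i\tau(t)}$ and changing variables $\rho = r/L$. The spatially-constant phase $e^{i\tau(t)}$ has modulus one and therefore drops out of both $\abs{\psi_\BS}^{2\sigma+2}$ and $\abs{\Delta\psi_\BS}^2$. For the nonlinear term, the amplitude contributes $L^{-(2/\sigma)(2\sigma+2)}$ and the radial Jacobian $r^{d-1}\,dr = L^d\rho^{d-1}\,d\rho$ contributes $L^d$, giving the weight $L^{-4-4/\sigma+d}$. For the quadratic term each Laplacian scales like $L^{-2}$, so $\Delta\psi_\BS = L^{-2/\sigma-2}(\Delta_\rho\BS)e^{i\tau}$, and together with the same Jacobian one again obtains the weight $L^{-4-4/\sigma+d}$. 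The key point — and the only thing that really needs checking — is that these two exponents coincide; they do, and both equal $-(4/\sigma-(d-4))$, which is precisely the source of the factorization above. This is no accident: $H$ is homogeneous of a single degree under the dilation group that generates the ansatz.

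With the factorization established, I would close the argument as follows. By $H^2$-subcriticality the exponent satisfies $4/\sigma-(d-4)>0$ (already recorded in the discussion preceding the theorem), so the prefactor $L^{-(4/\sigma-(d-4))}(t)$ is a genuinely non-constant function of $t$ along $L(t)=\kappa\sqrt[4]{\TCrit-t}$. Since $\psi_\BS$ is an exact BNLS solution by Lemma~\ref{lem:explicit}, its Hamiltonian is conserved, so the product of a non-constant function of $t$ with the fixed number $H[\BS]$ can be constant in $t$ only if $H[\BS]=0$. The hypothesis $H[\BS]<\infty$ enters exactly here: it guarantees that $H[\BS]$ is a well-defined finite constant that can be pulled out of the integrals, so that the displayed identity is honest rather than an indeterminate $\infty\cdot L^{-(4/\sigma-(d-4))}$ expression. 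I do not anticipate a serious obstacle; the entire content is the degree-matching of the two scaling exponents, after which conservation does the rest.
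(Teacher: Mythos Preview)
Your proposal is correct and follows essentially the same argument as the paper: establish the scaling identity $H[\psi_\BS]=L^{-(4/\sigma-(d-4))}H[\BS]$ and then invoke Hamiltonian conservation together with the non-constancy of the prefactor in the $H^2$-subcritical regime. You simply spell out the scaling computation for each term of $H$ in more detail than the paper does, and you make explicit where the finiteness hypothesis $H[\BS]<\infty$ is used.
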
  

\begin{lem} \label{lem:admissible_B23}
 Let $\sigma$  be in the $L^2$~supercritical and~$H^2$-subcritical regime~\eqref{eq:admissible-range-L+H}.
    Let $\BS(\rho)$ be a zero-Hamiltonian solution
    of~\eqref{eqs:explicit_ODE}.
    Then, 
\begin{enumerate}
  \item $c_2= c_3= 0$.
  \item If~$c_1\neq0$ then 
    \begin{equation}    \label{eq:BS_sim_BS1}
        \BS(\rho) \sim c_1\BS_{,1}(\rho) 
        ,\qquad
        \rho\to\infty,
    \end{equation}
    or, equivalently,
    \begin{subequations}    \label{eqs:BS_OH}
        \begin{equation}    \label{eq:BS_OH}
            \lim_{\rho\to\infty}\left( 
                \rho S^\prime + \left( 
                    \frac 2\sigma + i \frac{4\nu}{\kappa^4}
                \right)S
            \right)\rho^{\gamma}
            =0,
        \end{equation}
        where 
        \begin{equation}    \label{eq:BS_OH_gamma}
            \gamma_0<\gamma<\gamma_1,\qquad 
            \gamma_0 =\frac23\left( d-2-\frac2\sigma \right),\quad 
            \gamma_1 = 4+\frac2\sigma.
        \end{equation}
        Moreover, $\BS \in L^{2 \sigma+2}$ and~$\BS\not\in L^2$.
    \end{subequations}
\end{enumerate}
\end{lem}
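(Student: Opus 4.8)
The plan is to read every assertion off the four-mode WKB expansion $\BS\sim c_1\BS_{,1}+c_2\BS_{,2}+c_3\BS_{,3}+c_4\BS_{,4}$ established above, the membership table~\eqref{eqs:SB_spaces}, and the standing hypothesis that $\BS$ is a zero-Hamiltonian solution, which in particular gives $H[\BS]<\infty$. For part~1, $c_3=0$ is already forced by the decay condition $\BS(\infty)=0$ in~\eqref{eq:explicit_ODE_BCs}, since $\BS_{,3}$ grows like $\exp(+\tfrac{3\sqrt3}{8\sqrt[3]4}(\kappa\rho)^{4/3})$; this is exactly~\eqref{eq:noBS3}. To obtain $c_2=0$ I would invoke finiteness of the Hamiltonian. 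Every surviving mode lies in $L^{2\sigma+2}$ by~\eqref{eqs:SB_spaces}, so $\BS\in L^{2\sigma+2}$ and the nonlinear part of $H[\BS]$ is finite; hence $H[\BS]<\infty$ is equivalent to $\Delta\BS\in L^2$. But~\eqref{eqs:SB_spaces} gives $\Delta\BS_{,1},\Delta\BS_{,4}\in L^2$ while $\Delta\BS_{,2}\notin L^2$, so a nonzero $c_2$ would leave in $\Delta\BS$ a tail $c_2\Delta\BS_{,2}$ that is not square-integrable. This tail cannot be cancelled, because $\Delta\BS_{,2}$ carries the fast oscillation $e^{-i\,c(\kappa\rho)^{4/3}}$, whereas $\Delta\BS_{,1}$ is a non-oscillatory algebraic tail and $\Delta\BS_{,4}$ is exponentially small; the three are asymptotically independent. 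Therefore $\Delta\BS\in L^2$ forces $c_2=0$.

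For part~2, with $c_2=c_3=0$ we have $\BS\sim c_1\BS_{,1}+c_4\BS_{,4}$. Since $\BS_{,1}\sim\rho^{-2/\sigma-i4\nu/\kappa^4}$ decays only algebraically while $\BS_{,4}$ decays exponentially, $\BS_{,4}\ll\BS_{,1}$, so $c_1\neq0$ immediately yields $\BS\sim c_1\BS_{,1}$, which is~\eqref{eq:BS_sim_BS1}. The integrability claims follow from the same table: $\BS\in L^{2\sigma+2}$ as above, and since $c_1\BS_{,1}\notin L^2$ while $c_4\BS_{,4}\in L^2$, the sum cannot be in $L^2$, so $\BS\notin L^2$.

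To pass to the boundary-condition form~\eqref{eq:BS_OH}, the key observation is that the first-order operator $\mathcal{L}:=\rho\partial_\rho+\mu$, with $\mu=\tfrac2\sigma+i\tfrac{4\nu}{\kappa^4}$, annihilates the pure power $\BS_{,1}=\rho^{-\mu}$; thus $\rho\BS'+\mu\BS=\mathcal{L}\BS$ measures precisely the departure of $\BS$ from the admissible mode. Substituting the asymptotics, the leading $\BS_{,1}$-term is killed and the first surviving contribution comes from its algebraic correction — reinstating the biharmonic term gives $\Delta_\rho^2\rho^{-\mu}\sim\rho^{-\mu-4}$, hence a correction of relative size $\rho^{-4}$ — so that $\mathcal{L}\BS=O(\rho^{-2/\sigma-4})$ up to exponentially small $\BS_{,4}$-terms. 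Consequently $(\rho\BS'+\mu\BS)\rho^\gamma\to0$ exactly when $\gamma<\gamma_1=4+\tfrac2\sigma$, the upper endpoint. The lower endpoint $\gamma_0$ is what upgrades~\eqref{eq:BS_OH} from a mere consequence of~\eqref{eq:BS_sim_BS1} to an equivalent characterization of it: inserting a hypothetical $\BS_{,2}$-component into $\mathcal{L}$ and using $\mathcal{L}\BS_{,2}\sim\rho^{4/3}\BS_{,2}$, a slowly-decaying oscillatory tail, one requires that its weighted limit fail to vanish so that~\eqref{eq:BS_OH} genuinely rejects any spurious $\BS_{,2}$-mode; the precise value $\gamma_0=\tfrac23(d-2-\tfrac2\sigma)$ emerges from the amplitude exponent $\rho^{-\frac{2}{3\sigma}(\sigma d-1)}$ of $\BS_{,2}$ inserted into $\rho\BS'+\mu\BS$. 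Matching the two powers yields the window $\gamma_0<\gamma<\gamma_1$ of~\eqref{eq:BS_OH_gamma}.

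The step I expect to be the main obstacle is the rigor underlying the no-cancellation claim in part~1 (and, symmetrically, the faithfulness of the mode-by-mode evaluation of $\mathcal{L}\BS$): one must know that a nonzero $c_2$ genuinely survives in the non-$L^2$ tail of $\Delta\BS$ and cannot be masked by the $c_1$ and $c_4$ contributions. Making this airtight requires treating the four formal WKB solutions as a genuine fundamental system of the fourth-order linear operator in~\eqref{eq:SC_WKB_profile}, with controlled error terms — an asymptotic-independence/connection statement — all the more delicate because the existence of a nontrivial decaying $\BS$ is itself only conjectural. At the level of the present paper this is handled through the informal asymptotic analysis, the distinct algebraic, oscillatory, and exponential signatures of the four modes being exactly what makes the conclusions credible.
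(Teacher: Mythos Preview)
Your proposal is correct and follows essentially the same route as the paper: eliminate $c_3$ by the decay condition, eliminate $c_2$ by the requirement $\Delta\BS\in L^2$ coming from $H[\BS]<\infty$, read off the integrability statements from the table~\eqref{eqs:SB_spaces}, and establish the equivalence with~\eqref{eq:BS_OH} by applying the operator $\rho\partial_\rho+\tfrac2\sigma+i\tfrac{4\nu}{\kappa^4}$ to $\BS_{,1}$ and $\BS_{,2}$ separately and comparing the resulting powers against $\rho^{-\gamma}$. The paper's proof is slightly terser---it does not dwell on the no-cancellation issue you flag---but the logical skeleton is identical; your additional remarks on asymptotic independence and on the informal status of the WKB fundamental system are apt and only make explicit what the paper leaves implicit. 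One small item the paper includes and you omit is the check that the window $(\gamma_0,\gamma_1)$ is nonempty, which uses the $H^2$-subcriticality bound $d-4<4/\sigma$.
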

\begin{proof}
    The exponentially increasing solution~$c_3\BS_{,3}$ must vanish, as
    explained above.
    Convergence of the Hamiltonian requires that~$\Delta\BS\in L^{2}$.
    Since~$\Delta\BS_{,2}\not\in L^2$, see~\eqref{eqs:SB_spaces}, it follows
    that~$c_2=0$.
    Since~$\BS_{,1}\not\in L^2$, then~$\BS\not\in L^2$.
    
    To show that~\eqref{eqs:BS_OH} is equivalent to demanding that~$c_2=0$, we
    first note that in the $H^2$-subcritical regime $d-4<\frac4\sigma$ and so
    $\gamma_0<\frac23\left(2+\frac2\sigma\right)<\gamma_1$.
    Next, direct calculation gives that \[
    \left(
        \rho \frac{d}{d\rho} +
        \frac2\sigma+i\frac{4\nu}{\kappa^4}
    \right)\BS_{,2}
    \sim {\cal O} \left(
        \rho^{4/3-\frac2{3\sigma}(\sigma d-1)}
    \right),
    \quad   \rho\to\infty,
    \]
    and that  
    \[
    \left(
        \rho \frac{d}{d\rho} +
        \frac2\sigma+i\frac{4\nu}{\kappa^4}
    \right)\BS_{,1}
    \sim {\cal O}\left(
        \rho^{-\frac{2}{\sigma}-4}
    \right),
    \quad\rho\to\infty,
    \]
    where the LHS is the result of the next term in the WKB approximation
    of $\BS_{,1}$.
    Therefore, \[
        \rho^\gamma
        \left(
            \rho \frac{d}{d\rho} +
         \frac2\sigma+i\frac{4\nu}{\kappa^4}
        \right)\BS
        \sim {\cal O} \left(
            c_1\cdot\rho^{\gamma-\frac{2}{\sigma}-4}
        \right)
        +
        {\cal O}\left(
            c_2\cdot\rho^{\gamma+4/3-\frac{2}{3\sigma}(\sigma d-1)}
        \right),
        \quad \rho\to\infty.
    \]
    Since, for $\gamma_0<\gamma<\gamma_1$, \[
        \lim_{\rho\to\infty}
            \rho^{\gamma-\frac{2}{\sigma}-4}=0,
        \qquad
        \lim_{\rho\to\infty}
            \rho^{\gamma+\frac43-\frac2{3\sigma}(\sigma d-1)}=\infty,
    \]
    it follows that $c_2=0$ if and only if the limit~\eqref{eq:BS_OH} is
    satisfied.
\end{proof}

The fourth-order nonlinear ODE~\eqref{eq:explicit_ODE_profile} requires 
four boundary conditions.
Three boundary conditions are given by~\eqref{eq:explicit_ODE_BCs}, and 
the fourth condition will be the  zero-Hamiltonian condition~\eqref{eq:BS_OH}.  
Generically, one can expect that for a given~$\nu$, this nonlinear eigenvalue problem has an enumerable
number of eigenvalues~$\kappa^{(n)}$ with  corresponding eigenfunctions~$\BS^{(n)}$.
As in the case of the supercritical NLS~\cite{Koppel-95,Budd-01}, we
conjecture that for any~$(\sigma,d,\nu)$ there is a unique {\em admissible
solution}, which is monotonically decreasing.

\begin{conj}
    Let $\sigma$  be in the $L^2$~supercritical and~$H^2$-subcritical
    regime~\eqref{eq:admissible-range-L+H}, and let~$\nu>0$.
   Then, the nonlinear eigenvalue problem posed by
    equation~\eqref{eq:explicit_ODE_profile}, subject to the boundary
    conditions
    \begin{equation}    \label{eq:explicit_ODE_BCs_OH}
        \BS^\prime(0)=\BS^{\prime\prime\prime}(0)=\BS(\infty) =0,
        \qquad 
        \lim_{\rho\to\infty}\left( 
            \rho S^\prime + \left( 
            \frac 2\sigma + i \frac{4\nu}{\kappa^4}
            \right)S
        \right)\rho^{\gamma}
        =0,
    \end{equation}
    where~$\gamma$ satisfies~\eqref{eq:BS_OH_gamma}, admits a unique
    eigenpair~$(\BS^{\rm admis.}(\rho),\kappa^{\rm admis.})$, such that
$$
\kappa^{\rm admis.} = \kappa^{\rm admis.}(\sigma, d,\nu)>0,
$$
and $|\BS^{\rm admis.}(\rho)|$ is monotonically-decreasing.
Furthermore, 
$$
    \BS^{\rm admis.}(\rho) \sim 
        c \rho^{
            -2/\sigma
            -4\nu i/\kappa^4
            }, \qquad 
        \rho \longrightarrow  \infty,
$$
and 
$$
 \kappa^{\rm admis.}(\sigma, d,\nu)  =\nu^{1/4} \tilde{\kappa}^{\rm admis.}(\sigma, d), \qquad 
\tilde{\kappa}^{\rm admis.}:=\kappa^{\rm admis.}(\sigma, d,\nu=1).
$$
\end{conj}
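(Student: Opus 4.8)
The identity~\eqref{eq:Sb_tildeSb} already disposes of the last two displayed claims and reduces everything to the normalized problem~\eqref{eqs:explicit_ODE_nu=1} with $\nu=1$. Indeed, if $(\tilde{\BS},\tilde{\kappa}^{\rm admis.})$ is an admissible pair for $\nu=1$, then $\BS^{\rm admis.}(\rho)=\nu^{1/(2\sigma)}\tilde{\BS}(\nu^{1/4}\rho)$ and $\kappa^{\rm admis.}=\nu^{1/4}\tilde{\kappa}^{\rm admis.}$ solve the problem for general $\nu$, are monotone in modulus iff $\tilde{\BS}$ is, and inherit its uniqueness; this is exactly the asserted factorization $\kappa^{\rm admis.}=\nu^{1/4}\tilde{\kappa}^{\rm admis.}$. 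Moreover, once a solution with $c_1\neq0$ and $c_2=c_3=0$ is produced, its tail $\BS^{\rm admis.}\sim c\,\rho^{-2/\sigma-4\nu i/\kappa^4}$ is nothing but~\eqref{eq:BS_sim_BS1} of Lemma~\ref{lem:admissible_B23}. Thus the genuine content is the \emph{existence}, \emph{monotonicity} and \emph{uniqueness} of the normalized admissible pair, and my plan is to obtain these by continuation from the critical ground state, in exact analogy with the supercritical NLS~\cite{Koppel-95,Budd-01,Szeftel-09}.

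The natural anchor is $\kappa=0$, where the gauge term in~\eqref{eq:explicit_ODE_profile-nu=1} drops and the equation becomes the standing-wave equation $-\tilde{\BS}-\Delta_\rho^2\tilde{\BS}+\abs{\tilde{\BS}}^{2\sigma}\tilde{\BS}=0$, i.e.\ the analogue of~\eqref{eq:stationary_state_critical} with general exponent $\sigma$. Its real, positive, radial, exponentially decaying ground state $\BR$ (a pure $\BS_{,4}$ tail, $c_1=c_2=c_3=0$) is the anchor, and the crucial point is that a pair of Pohozaev/virial identities gives $H[\BR]=\frac{\sigma d-4}{d\sigma}\norm{\Delta\BR}_2^2$, so that $H[\BR]=0$ \emph{exactly} at the $L^2$-critical exponent $\sigma d=4$, matching the zero-Hamiltonian constraint that Theorem~\ref{thrm:H_of_SB} forces for $\kappa>0$. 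I would therefore fix $d$ and continue in the supercriticality parameter $\epsilon:=\sigma d-4$: at $\epsilon=0$ the pair $(\BR,0)$ solves the problem, and for $0<\epsilon\ll1$ I would seek a branch $(\tilde{\BS}_\epsilon,\kappa_\epsilon)$ with $\kappa_\epsilon>0$ bifurcating from it. Because the complex gauge term makes the operator non-self-adjoint, the usual variational construction of a ground state is unavailable, and a continuation/connection argument built on the linearization about $\BR$ seems the only viable route.

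Concretely I would treat the decay plus zero-Hamiltonian requirements as a connection problem for~\eqref{eqs:explicit_ODE_nu=1}. Near $\rho=0$ the regular solutions form a two-complex-parameter family (after fixing the phase, the free data are $\tilde{\BS}(0)>0$ and $\tilde{\BS}''(0)\in\Real^2$), while near $\rho=\infty$ the admissible solutions, those with $c_2=c_3=0$ spanned by $\BS_{,1}$ and $\BS_{,4}$, form a two-complex-parameter family; counting the four real shooting parameters $(\tilde{\BS}(0),\RE\tilde{\BS}''(0),\IM\tilde{\BS}''(0),\kappa)$ against the four real matching conditions $\RE c_2=\IM c_2=\RE c_3=\IM c_3=0$ shows that solutions are generically isolated and form a discrete family $\{\kappa^{(n)}\}$, the admissible branch being the nodeless one emanating from $\BR$. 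The delicate feature is that for small $\kappa$ the solution develops a two-scale structure: on the core $\rho=\mathcal{O}(1)$ it is $\mathcal{O}(\kappa)$-close to $\BR$ and exponentially small, whereas the algebraic tail $c_1\BS_{,1}$ is switched on only beyond the turning region $\rho\sim\kappa^{-1}$, with $\abs{c_1}$ exponentially small in $1/\kappa$. Capturing $c_1\neq0$, hence the very existence of a genuinely complex monotone profile and the selection rule relating $\kappa_\epsilon$ to $\epsilon$, is a beyond-all-orders phenomenon invisible to classical perturbation theory.

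Monotonicity of $\abs{\tilde{\BS}_\epsilon}$ and the nodelessness that distinguishes the admissible branch I would get by continuity from the monotone $\BR$ for small $\epsilon$, and uniqueness within this class from uniqueness and non-degeneracy of $\BR$ together with local uniqueness of the continuation. The main obstacle is precisely the exponentially-small eigenvalue selection above: making it rigorous requires exponential asymptotics (optimal truncation and Borel resummation of the WKB series underlying the Lemma), a program that even for the second-order supercritical NLS has been completed only in the slightly supercritical regime $0<\sigma d-2\ll1$~\cite{Szeftel-09}. Here the fourth-order operator $\Delta_\rho^2$ carries four WKB modes rather than two and, crucially, admits no maximum or comparison principle, so that the positivity, monotonicity, uniqueness and non-degeneracy of the anchor $\BR$, which are elementary for the second-order problem, are themselves substantial and must be secured before the continuation can begin. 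I would therefore expect a rigorous proof to be confined, at least initially, to $0<\sigma d-4\ll1$, the extension to general supercritical $\sigma$ requiring a global continuation that is likely to remain open.
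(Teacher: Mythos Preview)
The statement you are addressing is a \emph{conjecture}, not a theorem: the paper offers no proof for it. The authors explicitly write that ``a rigorous existence proof for the~$\BS$ profile remains open'' and that they ``provide numerical support for the existence of the admissible solutions''; the supporting evidence is entirely computational (the SLSR iteration of Appendix~\ref{sec:BS_numerics} and the BNLS simulations of Section~\ref{ssec:simulations}). There is therefore no proof in the paper to compare your attempt against.

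Your proposal is not a proof either, and you are candid about this. What you have written is a plausible and well-informed research program: reduce to $\nu=1$ via~\eqref{eq:Sb_tildeSb} (correct, and this does dispose of the last display); read off the tail asymptotics from Lemma~\ref{lem:admissible_B23} once $c_1\neq0$, $c_2=c_3=0$ is secured (also correct); and then attack existence, monotonicity and uniqueness by continuation from the critical ground state $\BR$ at $\kappa=0$, $\sigma d=4$, in the spirit of~\cite{Koppel-95,Budd-01,Szeftel-09}. You correctly identify the principal obstructions: the non-self-adjoint gauge term kills variational methods; the eigenvalue selection is a beyond-all-orders effect requiring exponential asymptotics; and the fourth-order operator lacks a maximum principle, so even the properties of the anchor $\BR$ (positivity, monotonicity, non-degeneracy) are nontrivial inputs rather than free. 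Your own conclusion, that a rigorous argument along these lines would at best cover the slightly supercritical regime $0<\sigma d-4\ll1$, is the honest assessment. None of this constitutes a proof of the conjecture, and the paper does not claim one; your write-up should be framed as a heuristic or a program, not as a proof.
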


\section{\label{sec:asymptotic}Peak-type $H^2$-singular solutions}

\subsection{\label{ssec:asymptotic_analysis}Informal analysis}

As in the supercritical NLS ~\cite{LeMesurier-88a,Shvets-93},   
we expect that singular peak-type solutions of the
supercritical BNLS undergo a quasi self-similar collapse, so that 
\begin{equation}    \label{eq:SC_quasi_parts}
    \psi(t,r) \sim
    \begin{cases}
        \psi_\BS(t,r)    &    0\le r\le r_c, \\
        \psi_{\text{non-singular}}(t,r)\quad    &    r\ge r_c ,
    \end{cases}
\end{equation}
where~$\psi_\BS$ is the self-similar profile~\eqref{eq:psi_SB}.
The singular region~$r\in[0,r_c]$ is constant in the coordinate~$r$. 
Therefore, in the rescaled variable~$\rho=r/L(t)$, the singular
region~$\rho\in[0,r_c/L(t)]$ becomes infinite as~$L(t)\to0$.
This is in contradistinction with the critical-BNLS case, where the singular
region~$\rho\in[0,\rho_c]$ is constant in the rescaled variable~$\rho$, but
shrinks to a point in the original coordinate~$r$~\cite{Baruch_Fibich_Mandelbaum:2009a}.

\begin{lem} \label{lem:supercrit_peak_rate_profile}
    Let~$\sigma d>4$, and let~$\psi$ be a peak-type singular solution of the
    BNLS that collapses with the~$\psi_\BS$
    profile~\eqref{eq:psi_SB}.
    If~$L(t)\sim\kappa(\TCrit-t)^p$, then~$p \ge \frac14$.
    Furthermore,
\begin{itemize}
  \item If~$p=1/4$ if then the self-similar profile~$\BS(\rho)$
    satisfies the equation~\eqref{eq:explicit_ODE_profile}.
 \item If~$p>1/4$,
    then the profile satisfies the equation 
\begin{equation}    \label{eq:stationary_state}
    -\Delta^2 \BR(\bvec{x}) - \BR + |\BR|^{2\sigma}\BR = 0.
\end{equation}
 \end{itemize}
   \begin{proof}
        If~$\psi\sim\psi_\BS$, then the equation for~$\BS$ is
        \begin{equation}    \label{eq:peak_SCBNLS_ODE_1}
            -\nu\BS - i \left(
                \lim_{t\to\TCrit} L_tL^3
            \right) \left(
                \frac 2\sigma \BS + \rho \BS^\prime 
            \right)
            -\Delta^2_\rho \BS +\abs{\BS}^{2\sigma}\BS = 0,
        \end{equation}
        implying that~$L_t L^3$ should be bounded as~$t\to \TCrit$.
        Since~$L^3L_t \sim - p\kappa^4(\TCrit-t)^{4p-1}$, it follows
        that~$p\ge\frac14$.
        If~$p=1/4$, then equation~\eqref{eq:peak_SCBNLS_ODE_1} reduces
        to~\eqref{eq:explicit_ODE_profile}, see Section~\ref{sec:explicit}.
    \end{proof}
\end{lem}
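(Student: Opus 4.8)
The plan is to substitute the quasi self-similar relation $\psi\sim\psi_\BS$, with $\psi_\BS$ given by~\eqref{eq:psi_SB}, into the radial BNLS and read off the ODE that the profile~$\BS$ must satisfy in the limit $t\to\TCrit$. The substitution itself was already carried out in Section~\ref{sec:explicit}: before any constraint is imposed, equation~\eqref{eq:psi_SB_pre_ODE} shows that the only $t$-dependence in the profile equation sits in the two coefficients $\tau'(t)L^4(t)$ and $L^3(t)L_t(t)$. For~$\BS$ to be a genuine, $t$-independent function of~$\rho$, both coefficients must tend to finite limits as $t\to\TCrit$. Writing $\nu:=\lim_{t\to\TCrit}\tau'L^4$ and $A:=\lim_{t\to\TCrit}L^3L_t$, the profile then solves
\[
    -\nu\BS - iA\left(\frac2\sigma\BS+\rho\BS'\right)-\Delta_\rho^2\BS+\abs{\BS}^{2\sigma}\BS=0,
\]
so the entire statement reduces to deciding when $A$ exists and what value it takes.

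I would then compute $A$ directly from the assumed rate. With $L(t)\sim\kappa(\TCrit-t)^p$ one has $L_t\sim -p\kappa(\TCrit-t)^{p-1}$, whence
\[
    L^3L_t\sim -p\kappa^4(\TCrit-t)^{4p-1},\qquad t\to\TCrit.
\]
Because $\TCrit-t\to 0^+$, this quantity remains bounded precisely when $4p-1\ge 0$. If $p<\frac14$ the coefficient diverges, which is incompatible with $\BS$ being a fixed profile; this forces $p\ge\frac14$ and yields the claimed rate bound.

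The two sub-cases are now immediate from the value of $A$. For $p=\frac14$ the exponent $4p-1$ vanishes and $A=-\kappa^4/4\neq 0$, so the displayed equation is exactly~\eqref{eq:explicit_ODE_profile}. For $p>\frac14$ the exponent is positive and $A=0$, so the scaling term drops out and $\BS$ satisfies $-\nu\BS-\Delta_\rho^2\BS+\abs{\BS}^{2\sigma}\BS=0$; applying the rescaling $\BS(\rho)=\nu^{1/(2\sigma)}\BR(\nu^{1/4}\rho)$ (cf.~\eqref{eq:Sb_tildeSb}) normalizes the coefficient to one and converts this into the standing-wave equation~\eqref{eq:stationary_state} for~$\BR$.

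The delicate point is not the asymptotic algebra, which is a one-line computation, but the reduction from the quasi self-similar relation $\psi\sim\psi_\BS$ to an exact limiting ODE. Making rigorous sense of ``$\sim$'', and justifying that the neglected (non-self-similar, large-$\rho$) contributions stay negligible while $\tau'L^4$ and $L^3L_t$ converge so that $\nu$ and $A$ actually exist, is exactly the step left informal in the analogous supercritical NLS theory. I would therefore carry out the reduction at the same asymptotic level, treating the existence of these limits as the underlying modeling hypothesis rather than attempting a rigorous proof.
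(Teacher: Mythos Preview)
Your argument is correct and follows exactly the same route as the paper: substitute~\eqref{eq:psi_SB} into the BNLS to obtain~\eqref{eq:psi_SB_pre_ODE}, require the coefficient $L^3L_t$ to have a finite limit, and read off the two cases from the exponent $4p-1$. If anything you are slightly more explicit than the paper, which in its proof only spells out the $p=1/4$ case and leaves the $p>1/4$ reduction to~\eqref{eq:stationary_state} (via the vanishing of the scaling term and the $\nu$-rescaling) implicit.
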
 

From Hamiltonian conservation it follows that~$
    H\left[ \psi_\BS \right]
$ is bounded, because otherwise the non-singular region would also have an
infinite Hamiltonian.
Therefore, from Theorem~\ref{thrm:H_of_SB} it follows 
that~$H\left[ \BS \right]=0$.

In Lemma~\ref{lem:admissible_B23} we saw that the zero-Hamiltonian solutions
of~\eqref{eq:explicit_ODE_profile} are in~$L^{2+2\sigma}$, but not in~$L^2$. Hence,  $\psi_\BS\not\in L^2$. From power conservation, however,
 it follows that if~$\psi_0\in H^2$, 
then~$\psi\in L^2$. As in the NLS case, see~\cite{Berge-92}, 
this ``contradiction'' can be resolved as follows.
\begin{cor}    \label{cor:admissible_B23_power}
    Let $\BS(\rho)$ be a zero-Hamiltonian solution
    of~\eqref{eqs:explicit_ODE}.
    Then, $\norm{\BS}_2=\infty$.
    Nevertheless,~$
        \displaystyle \lim_{t\to\TCrit}
            \norm{\psi_\BS}_{L^2(r<r_c)}<\infty
    $.
\end{cor}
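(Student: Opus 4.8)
The first assertion is immediate from Lemma~\ref{lem:admissible_B23}. A zero-Hamiltonian solution has $c_2=c_3=0$, so its slowest-decaying tail is $c_1\BS_{,1}$ with $c_1\neq0$, and by~\eqref{eqs:SB_spaces} we have $\BS_{,1}\not\in L^2(\Real^d)$; hence $\norm{\BS}_2=\infty$. Concretely, since $|\BS_{,1}(\rho)|^2\sim\rho^{-4/\sigma}$, the integral $\int^\infty|\BS_{,1}|^2\rho^{d-1}\,d\rho\sim\int^\infty\rho^{d-1-4/\sigma}\,d\rho$ diverges precisely because $d-4/\sigma>0$ in the supercritical regime $\sigma d>4$.

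For the second assertion the plan is to pass to the self-similar variable $\rho=r/L(t)$. Using $\psi_\BS=L^{-2/\sigma}\BS(r/L)e^{i\tau}$, a direct change of variables gives
\begin{equation*}
    \norm{\psi_\BS}_{L^2(r<r_c)}^2
    = \omega_{d-1}\,L^{d-4/\sigma}(t)\int_0^{r_c/L(t)}|\BS(\rho)|^2\,\rho^{d-1}\,d\rho ,
\end{equation*}
where $\omega_{d-1}$ is the surface area of the unit sphere in $\Real^d$. As $t\to\TCrit$ the prefactor $L^{d-4/\sigma}\to0$ while the upper limit $r_c/L\to\infty$, so the truncated integral diverges. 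The point is that the two effects cancel exactly. To see this I would split the integral at a fixed large $\rho_0$ beyond which $|\BS(\rho)|^2\sim|c_1|^2\rho^{-4/\sigma}$. The inner piece $\int_0^{\rho_0}$ is a fixed finite constant, so multiplied by $L^{d-4/\sigma}$ it vanishes in the limit; the outer piece contributes
\begin{equation*}
    L^{d-4/\sigma}\,|c_1|^2\int_{\rho_0}^{r_c/L}\rho^{d-1-4/\sigma}\,d\rho
    = \frac{|c_1|^2}{d-4/\sigma}\left(r_c^{\,d-4/\sigma}-\rho_0^{\,d-4/\sigma}L^{d-4/\sigma}\right),
\end{equation*}
which tends to $|c_1|^2 r_c^{\,d-4/\sigma}/(d-4/\sigma)$ as $L\to0$. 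Hence $\lim_{t\to\TCrit}\norm{\psi_\BS}_{L^2(r<r_c)}^2=\omega_{d-1}|c_1|^2 r_c^{\,d-4/\sigma}/(d-4/\sigma)<\infty$.

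The substance of the argument, and the step I would treat most carefully, is the exact cancellation of the vanishing prefactor $L^{d-4/\sigma}$ against the leading divergence $(r_c/L)^{d-4/\sigma}$ of the truncated integral: this is an indeterminate $0\cdot\infty$ form whose resolution hinges on the precise tail $\BS\sim c_1\BS_{,1}$ and on the two exponents matching exactly rather than merely being comparable. I would make this rigorous by bounding the error $|\BS(\rho)|^2-|c_1|^2\rho^{-4/\sigma}$ uniformly for $\rho>\rho_0$, using that the next-order WKB correction to $\BS_{,1}$ is $\mathcal{O}(\rho^{-4/\sigma-4})=o(\rho^{-4/\sigma})$, so that its contribution to the integral is of strictly lower order in $r_c/L$ and its product with $L^{d-4/\sigma}$ still tends to zero. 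This reproduces, for the supercritical BNLS, the familiar NLS mechanism whereby an $L^2$-infinite profile nonetheless carries only finite power through any fixed physical ball as $t\to\TCrit$.
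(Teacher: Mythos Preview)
Your proof is correct and follows essentially the same route as the paper: change variables to $\rho=r/L$, obtain the prefactor $L^{d-4/\sigma}$, and use the tail asymptotic $\BS\sim c_1\BS_{,1}$ so that the truncated integral grows like $(r_c/L)^{d-4/\sigma}$ and exactly cancels the prefactor. Your version is in fact more careful than the paper's---splitting at a fixed $\rho_0$ and controlling the subleading WKB correction---whereas the paper simply writes the one-line heuristic $L^{d-4/\sigma}\cdot C\rho^{d-4/\sigma}\big|_0^{r_c/L}=\mathcal{O}(1)$.
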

\begin{proof}
    Since $\BS(\rho)\sim c_1\BS_{,1}(\rho)$, 
    \[
        \norm{\BS_{,1}}_2^2 
        \sim C \int_{\rho=0}^{\infty}
            \rho^{-4/\sigma+d-1} d\rho 
            \sim C \rho^{d-4/\sigma} \Big|_{\rho=0}^{\infty}
            = \infty.
    \]
    The profile~$\psi_\BS$ satisfies
    \begin{eqnarray*}
        \norm{\psi_\BS}_{L^2(r<r_c)}^2 
        &=&
        L^{d-4/\sigma}(t) \cdot 
        \int_{\rho=0}^{r_c/L(t)}
            \abs{\BS(\rho)}^2
            \rho^{d-1}d\rho \\
        &\sim& 
        L^{d-4/\sigma}(t) \cdot \left( 
            C \rho^{d-4/\sigma} \Big|_{\rho=0}^{r_c/L(t)}
        \right) 
        =
        \mathcal{O}(1).
    \end{eqnarray*}
\end{proof}

In summary, we conjecture the following:
\begin{conj}    \label{conj:supercrit_peak_rate_profile} ~\\
    Let $\psi$ be peak-type singular solution of the supercritical BNLS. 
    Then,
    \begin{subequations} \label{eqs:supercrit_peak_QSS}
        \begin{enumerate}
            \item The collapsing core approaches the self-similar
                profile~$\psi_\BS$, i.e.,
                \begin{equation}    \label{eq:supercrit_peak_QSS-1}
                    \psi(t,r) \sim \psi_\BS(t,r), \qquad 
                    0\le r\le r_c,
                \end{equation}
                where
                \begin{equation}    \label{eq:supercrit_peak_QSS-2} 
                    \psi_\BS(t,r) = 
                        \frac1{L^{2/\sigma}(t)}    \BS(\rho)    e^{i \nu \tau(t)},\qquad 
                    \rho = \frac rL,\qquad 
                    \tau(t) = \int_{s=0}^{t}\frac{1}{L^{4}(s)}ds.
                \end{equation}
            \item The self-similar profile~$\BS(\rho) = \BS^{\rm admis.}(\rho)$ is the unique admissible solution of 
                \begin{equation}    \label{eq:supercrit_peak_ODE}
                    \begin{gathered}
                        - \nu \BS(\rho) + i\frac{\kappa^4}4 \left(
                            \frac{2}{\sigma}\BS + \rho \BS^\prime 
                        \right)
                        - \Delta_\rho^2 \BS + |\BS|^{2\sigma}\BS = 0, \\
                        \BS^\prime(0)=\BS^{\prime\prime\prime}(0)=0,
                        \qquad
                        \BS(\infty)=0,
                        \qquad
                        H[\BS]=0,
                    \end{gathered}
                \end{equation}
                where $\kappa^{\rm admis.}(\sigma, d,\nu)  =\nu^{1/4} \tilde{\kappa}^{\rm admis.}(\sigma, d) >0$.

            \item In particular, $\BS(\rho)\neq \BR(\rho)$.
 
            \item The blowup rate of singular peak-type solutions is exactly a
                    quartic root, i.e.,\begin{equation}    \label{eq:rate_14}
                    L(t)\sim\kappa\sqrt[4]{\TCrit-t},
                    \qquad \kappa>0.
                \end{equation}
            \item The coefficient~$\kappa$ of the blowup rate of~$L(t)$ is equal
                    to the value of~$\kappa$ of the admissible solution~$\BS$,
                    i.e., \[
                    \kappa := \lim_{t\to\TCrit} 
                        \frac{L(t)}{\sqrt[4]{\TCrit-t}}  
                    = \kappa^{\rm admis.}(\sigma,d,\nu).
                \] 
                In particular, $\kappa$  is universal (i.e., it does not depend
                on the initial condition).
        \end{enumerate}
    \end{subequations}
\end{conj}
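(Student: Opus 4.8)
The plan is to treat this conjecture as the endpoint of a formal matched-asymptotics reduction, buttressed by the analogy with the supercritical NLS and by direct numerical verification, rather than as a rigorously provable statement. I would begin from the quasi self-similar ansatz~\eqref{eq:SC_quasi_parts}, in which the collapsing core is slaved to the profile $\psi_\BS$ while the outer region stays regular. Substituting this into the radial BNLS and passing to $\rho = r/L(t)$ yields, to leading order, the profile equation~\eqref{eq:peak_SCBNLS_ODE_1}. Lemma~\ref{lem:supercrit_peak_rate_profile} already performs the crucial book-keeping: boundedness of $L^3 L_t$ forces $p \ge 1/4$; the choice $p = 1/4$ reduces the profile equation to~\eqref{eq:explicit_ODE_profile}; and $p > 1/4$ kills the self-similar terms, leaving the standing-wave equation~\eqref{eq:stationary_state} for $\BR$. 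This organizes the derivation of items (1)--(5) into two scenarios according to the value of $p$, and once the $p = 1/4$ scenario is shown to be the generic one, items (1) and (4) follow directly from the ansatz and its self-consistency.

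Next I would pin down the profile and the rate constant. Since the outer region has finite Hamiltonian and $H$ is conserved, $H[\psi_\BS]$ must be bounded, so Theorem~\ref{thrm:H_of_SB} forces $H[\BS] = 0$. Lemma~\ref{lem:admissible_B23} then gives $c_2 = c_3 = 0$ and the pure algebraic tail $\BS \sim c_1 \BS_{,1}$, equivalently the boundary condition~\eqref{eq:BS_OH}. Adjoining this to the three conditions~\eqref{eq:explicit_ODE_BCs} turns~\eqref{eq:explicit_ODE_profile} into a nonlinear eigenvalue problem, and invoking the admissibility conjecture (a monotone, zero-Hamiltonian solution) selects a unique pair $(\BS^{\rm admis.}, \kappa^{\rm admis.})$ with $\kappa^{\rm admis.} > 0$; this is item (2). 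Item (5) is then automatic, because the constant $\kappa$ in $L(t) \sim \kappa(\TCrit - t)^{1/4}$ is the very eigenvalue that enters~\eqref{eq:explicit_ODE_profile}, so $\kappa = \kappa^{\rm admis.}(\sigma, d, \nu)$, and universality follows since $\kappa^{\rm admis.}$ depends only on $(\sigma, d, \nu)$. Item (3) is the cleanest: $\BR$ solves~\eqref{eq:stationary_state} and lies in $L^2$, whereas $\kappa^{\rm admis.} > 0$ retains the self-similar terms and, by Corollary~\ref{cor:admissible_B23_power}, $\BS \not\in L^2$; hence $\BS \neq \BR$.

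The genuinely hard step is to exclude the $p > 1/4$, $\BR$-profile scenario for generic peak-type data, which is what promotes the inequality $p \ge 1/4$ to the exact rate~\eqref{eq:rate_14}. By analogy with the supercritical NLS, I would argue that collapse onto the ground state $\BR$ at a faster-than-self-similar rate is dynamically unstable: it needs fine-tuned data, and any generic perturbation is expected to drive the solution onto the stable self-similar $\BS^{\rm admis.}$ attractor with the exact quartic rate. Making this rigorous is precisely where the program stalls. It requires a modulation-theoretic and spectral analysis of the linearization of~\eqref{eq:explicit_ODE_profile} about $\BS^{\rm admis.}$ --- the machinery that, for the second-order NLS, was carried out only recently and only in the slightly-supercritical regime~\cite{Szeftel-09}. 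For the biharmonic operator $\Delta^2$ the obstacles are sharper: there is no maximum principle, the quadratic form is not sign-definite, and the spectral structure of the fourth-order linearized operator is not understood, so even the existence half of the eigenvalue problem remains open (Conjecture~\ref{conj:explicit}). Accordingly, my proposal is to establish items (1)--(5) formally through the asymptotic reduction above, and to certify the selection of $(\BS^{\rm admis.}, \kappa^{\rm admis.})$ and the exact rate numerically, solving the profile ODE by the modified Petviashvili scheme and comparing against direct BNLS simulations, exactly as the analogy with the supercritical NLS suggests.
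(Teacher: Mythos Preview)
Your proposal is correct and follows essentially the same route as the paper: the informal analysis in Section~\ref{ssec:asymptotic_analysis} proceeds from the quasi self-similar ansatz~\eqref{eq:SC_quasi_parts}, uses Lemma~\ref{lem:supercrit_peak_rate_profile} to get $p\ge 1/4$ and the two profile alternatives, invokes Hamiltonian conservation together with Theorem~\ref{thrm:H_of_SB}, Lemma~\ref{lem:admissible_B23} and Corollary~\ref{cor:admissible_B23_power} to single out the admissible zero-Hamiltonian $\BS$, and then defers the selection of $p=1/4$ and the universality of $\kappa$ to the numerical evidence of Section~\ref{ssec:simulations}. Your added heuristic that the $p>1/4$, $\BR$-profile scenario should be dynamically unstable is a reasonable elaboration in the spirit of the NLS analogy, but the paper itself does not argue this explicitly and simply leaves the exact-rate claim at the level of conjecture plus simulation.
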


\noindent In Section~\ref{ssec:simulations} we provide numerical
evidence in support of Conjecture~\ref{conj:supercrit_peak_rate_profile}.

\subsection{Simulations}
\label{ssec:simulations}

\begin{figure}
    \centering
    \subfloat[$d=1, \sigma=6$, $\psi_0 = 1.6e^{-x^2}$]{\label{fig:supercrit_peak_amp_Ls_1D}%
        \includegraphics[angle=-90,clip,width=0.35\textwidth]%
            {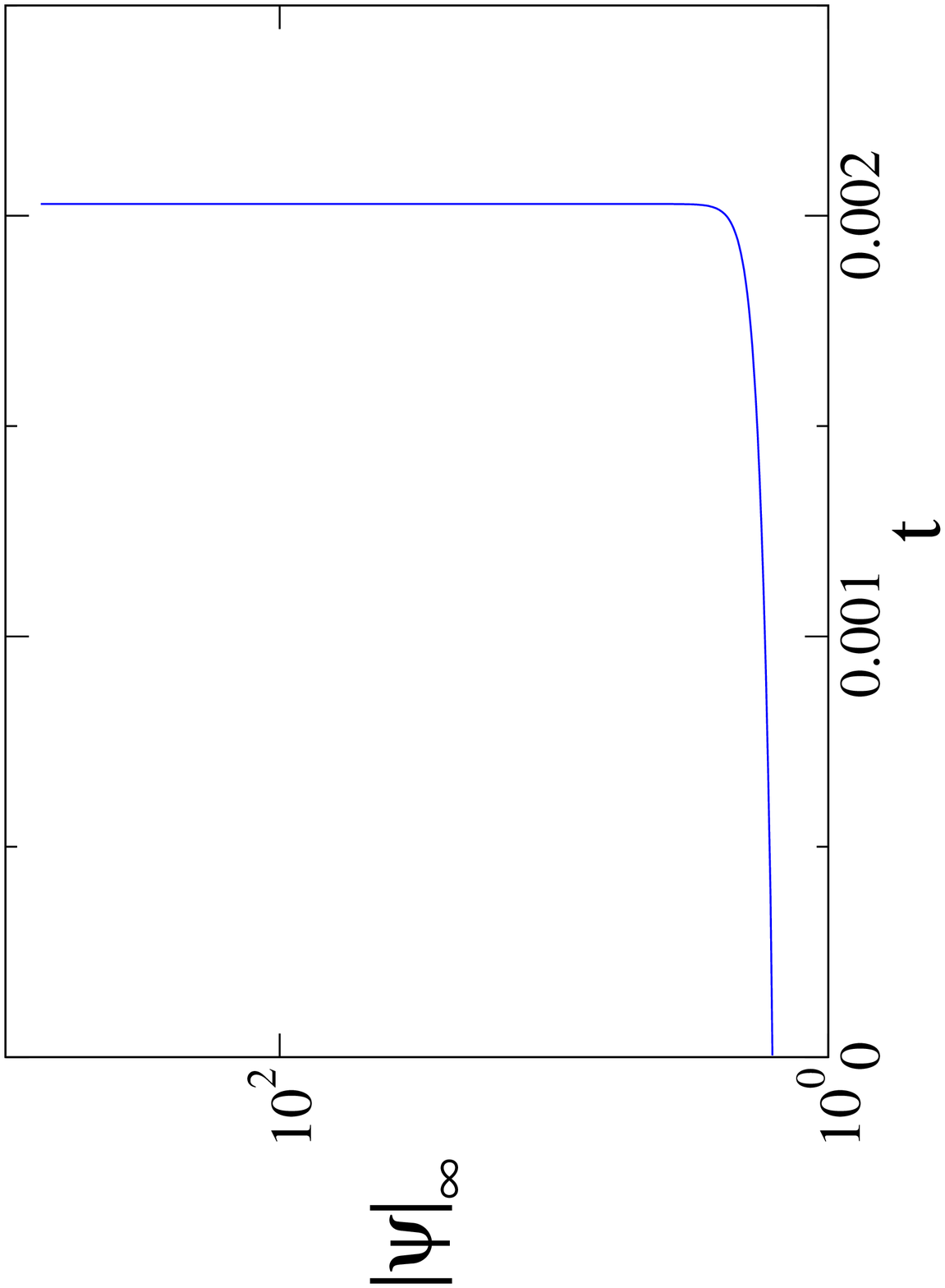}%
    }%
    \subfloat[$d=2, \sigma=3$, $\psi_0 = 3e^{-r^2}$]{\label{fig:supercrit_peak_amp_Ls_2D}%
        \includegraphics[angle=-90,clip,width=0.35\textwidth]%
            {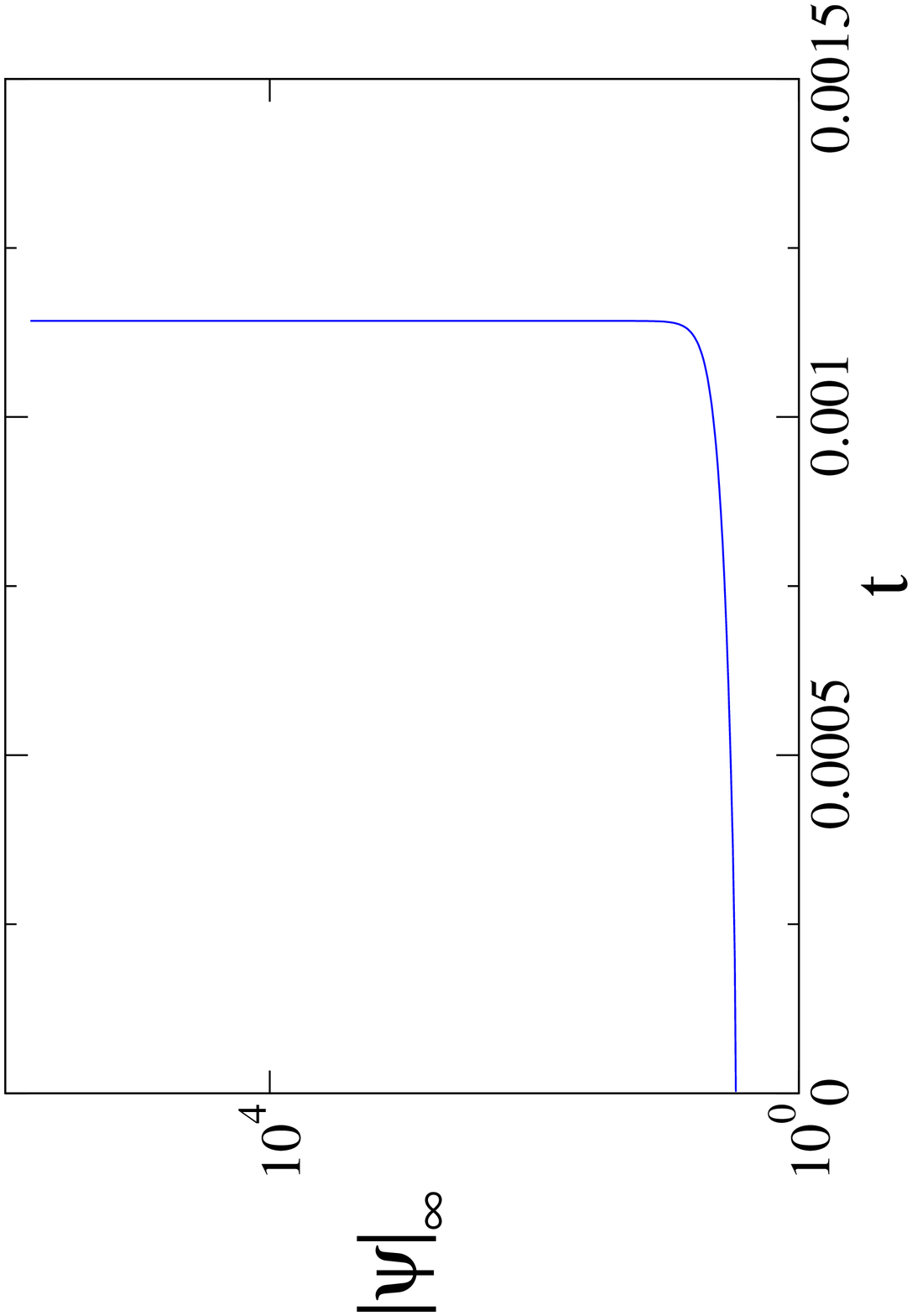}%
    }

    \mycaption{\label{fig:supercrit_peak_amp_Ls}%
        Maximal amplitude of singular solutions of the supercritical BNLS.
    }
\end{figure}
The radially-symmetric BNLS~\eqref{eq:radial_BNLS} was solved in the
supercritical cases:
\begin{enumerate}
  \item $d=1,\sigma=6$ with the initial 
condition~$\psi_0(x)=1.6e^{-x^2}$.
\item $d=2,\sigma=3$ with the initial condition~$\psi_0(r)=3e^{-r^2}$.
\end{enumerate}
In both cases, the solutions blowup at a finite time, see
Figure~\ref{fig:supercrit_peak_amp_Ls}.

\begin{figure}
    \centering
    \subfloat[$d=1,\sigma=6$]{\label{fig:supercrit_peak_rescaled_1D}%
        \includegraphics[clip,angle=-90,width=0.45\textwidth]%
            {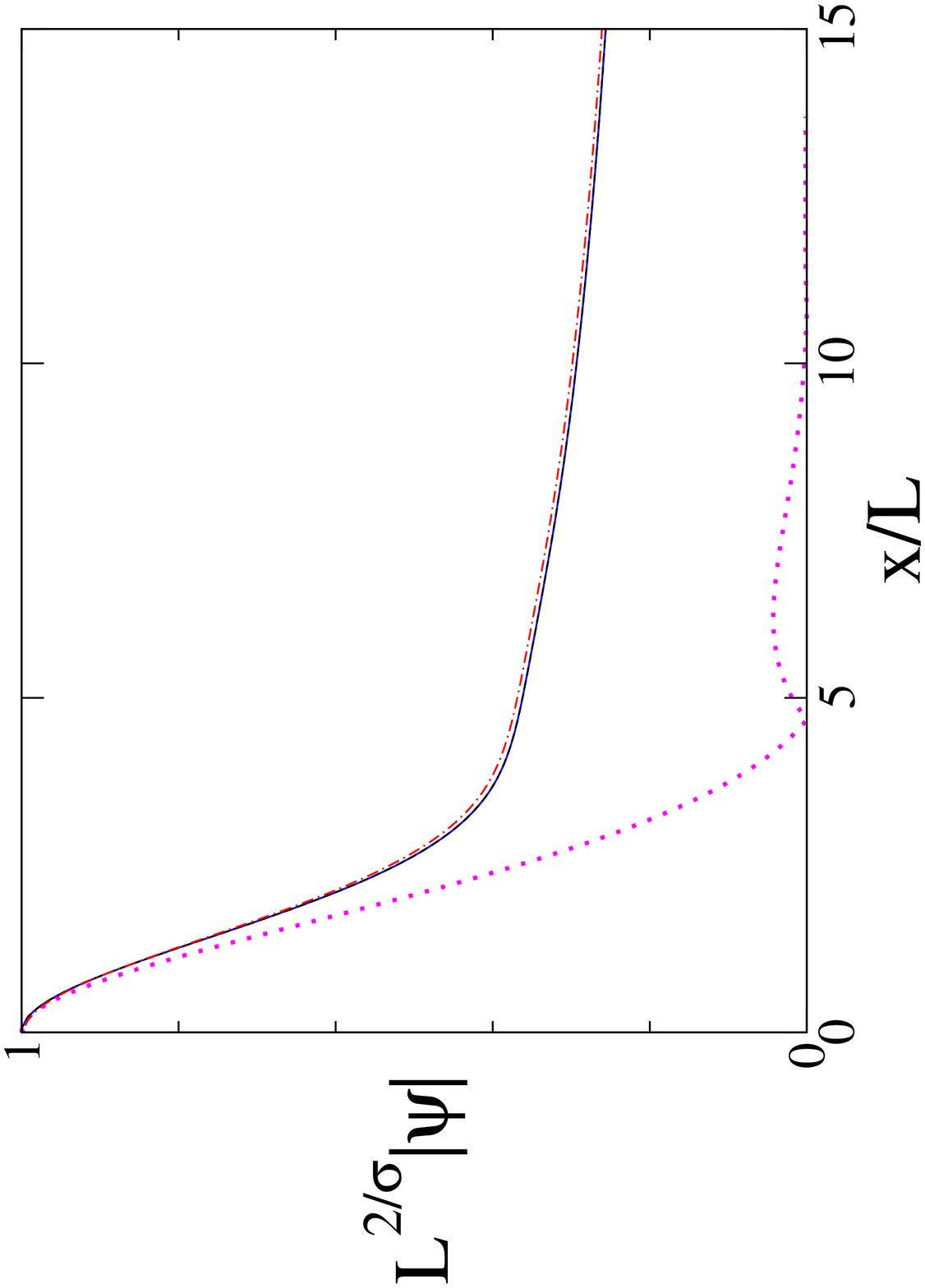}%
    }%
    \subfloat[$d=2,\sigma=3$]{\label{fig:supercrit_peak_rescaled_2D}%
        \includegraphics[clip,angle=-90,width=0.45\textwidth]%
            {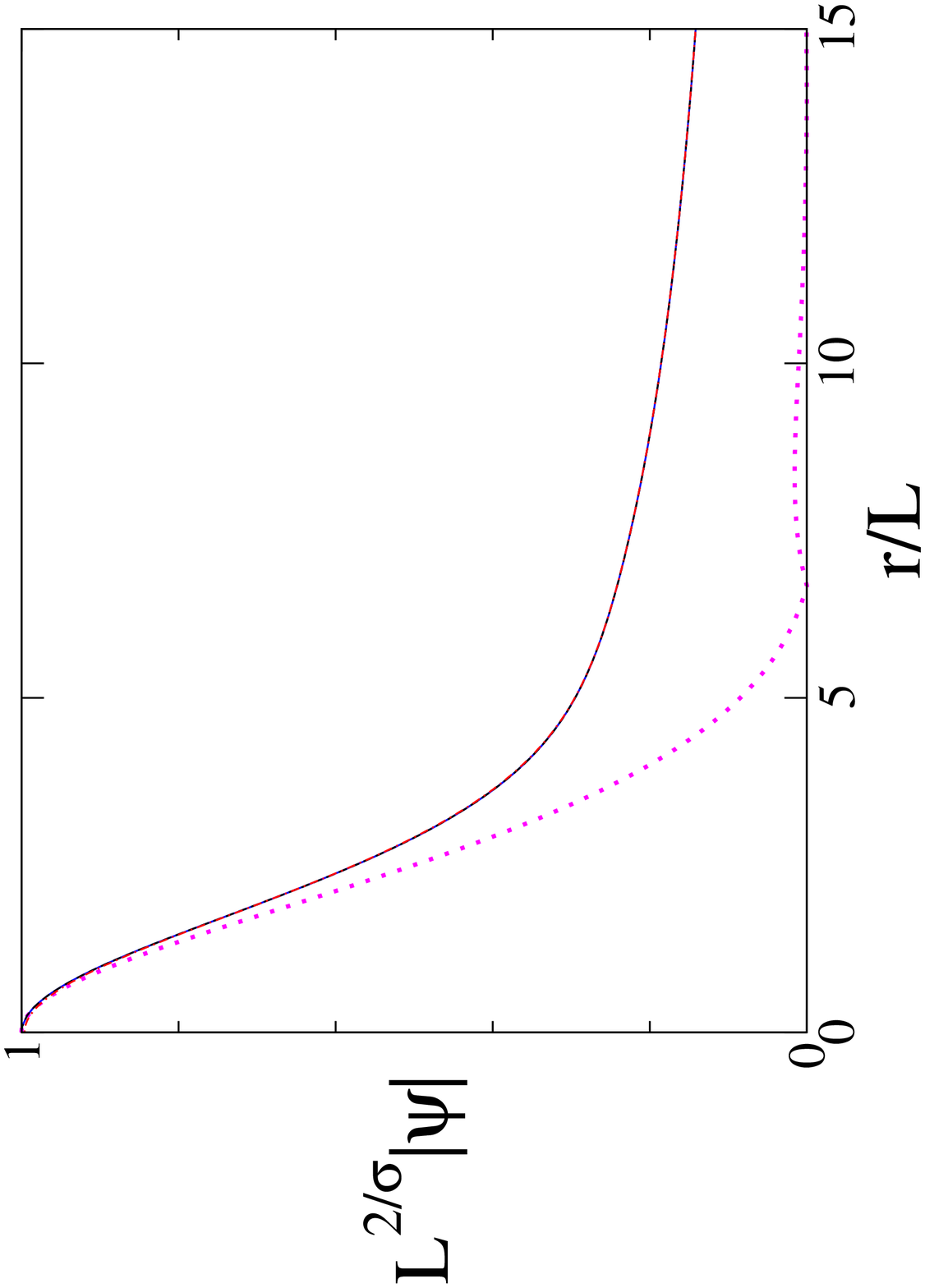}%
    }%

    \mycaption{\label{fig:supercrit_peak_rescaled}
        The solutions of Figure~\ref{fig:supercrit_peak_amp_Ls}, rescaled
        according to~\eqref{eq:psi_rescaled_peak}, at the focusing levels~$1/L=10^4$
        (blue solid line) and~$1/L=10^8$ (black dashed line).
        The red dash-dotted line is the rescaled solution~$\BS$ of~\eqref{eqs:supercrit_peak_QSS}.
        The magenta dotted line is the rescaled ground-state~$R$.
    }
\end{figure}
\begin{figure}
    \centering
    \subfloat[$d=1,\sigma=6$]{\label{fig:supercrit_peak_rescaled_far_1D}%
        \includegraphics[clip,angle=-90,width=0.45\textwidth]%
            {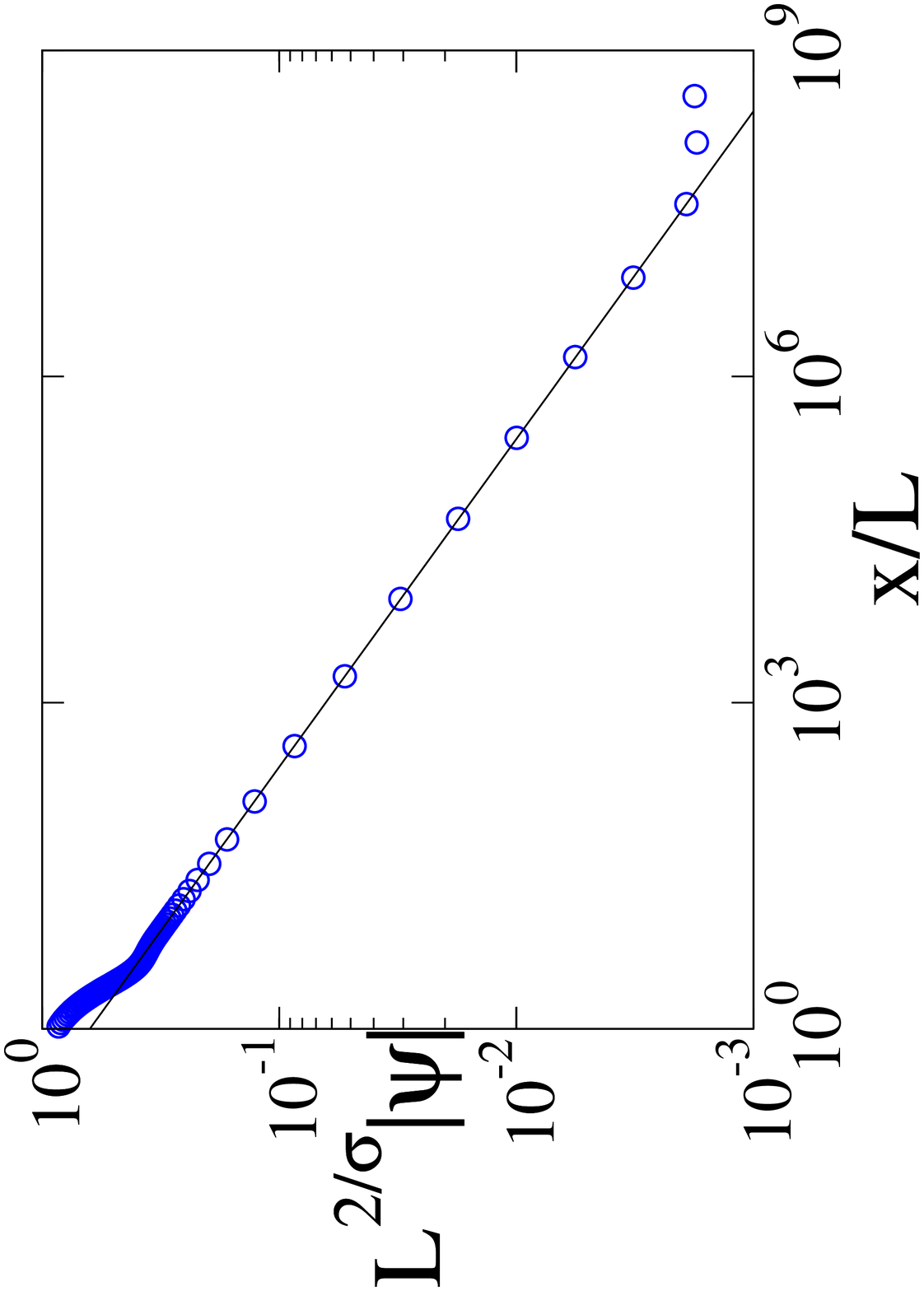}%
    }%
    \subfloat[$d=2,\sigma=3$]{\label{fig:supercrit_peak_rescaled_far_2D}%
        \includegraphics[clip,angle=-90,width=0.45\textwidth]%
            {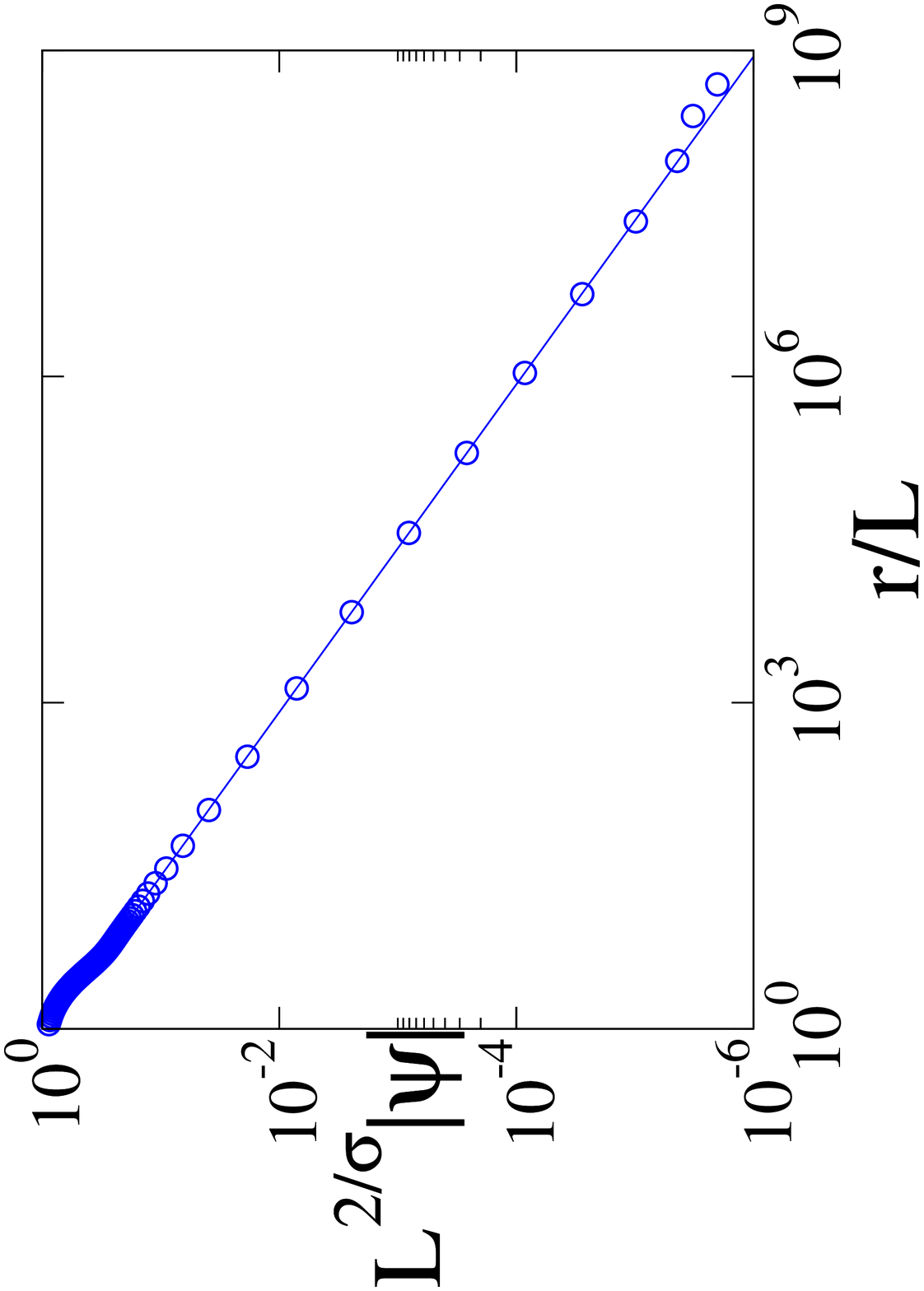}%
    }%

    \mycaption{\label{fig:supercrit_peak_rescaled_far}
        The solutions of Figure~\ref{fig:supercrit_peak_amp_Ls}, rescaled
        according to~\eqref{eq:psi_rescaled_peak}, at the focusing level~$1/L=10^8$
        (circles).
        Solid lines are the fitted curves~$y=0.63\cdot( x/L )^{-0.33}$ (left)
        and~$y=0.85\cdot( r/L )^{-0.66}$ (right).
    }
\end{figure}
To check whether the solutions collapse with the self-similar
profile~\eqref{eqs:supercrit_peak_QSS}, the solution was rescaled according to
\begin{equation}    \label{eq:psi_rescaled_peak}
    \psi_{\text{rescaled}}(t,\rho) =
        L^{2/\sigma}(t) \psi(t,r=\rho\cdot L), \qquad
    L(t)=\norm{\psi}_\infty^{-2/\sigma}.
\end{equation}
Comparing this rescaling with~\eqref{eq:supercrit_peak_QSS-2} shows that it implies
that $|\BS(0)|=\norm{\BS}_\infty  = 1$. 
This requirement  can always be satisfied with a proper choice of~$\nu$,
see~\eqref{eq:Sb_tildeSb}.
Figure~\ref{fig:supercrit_peak_rescaled_1D} shows the rescaled solutions at the
focusing levels~$L=10^{-4}$ and~$L=10^{-8}$, and the rescaled solution~$\BS(\rho)$
of~\eqref{eqs:supercrit_peak_QSS}.\footnote{
    In the calculation of~$\BS$, see \myappendix\ref{sec:BS_numerics}, the values
    of~$\nu$ and~$\kappa$ were extracted from the BNLS simulation as discussed
    below, see equations~(\ref{eq:kappa_numeric},\ref{eq:nu_numeric}).
}
The three curves are indistinguishable, showing that the solution is
self-similar with the~$\BS$ profile, and not with the $\BR$~profile.
As additional evidence, Figure~\ref{fig:supercrit_peak_rescaled_far} shows that
as~$\rho\to\infty$, the self-similar profile of~$\psi$ decays
as~$\rho^{-2/\sigma}$, which is in agreement with the decay rate
of~$\BS_{,1}(\rho)$.

\begin{figure}
\centering
    \subfloat[$d=1,\sigma=6$]{%
        \includegraphics[angle=-90,clip,width=0.45\textwidth]%
            {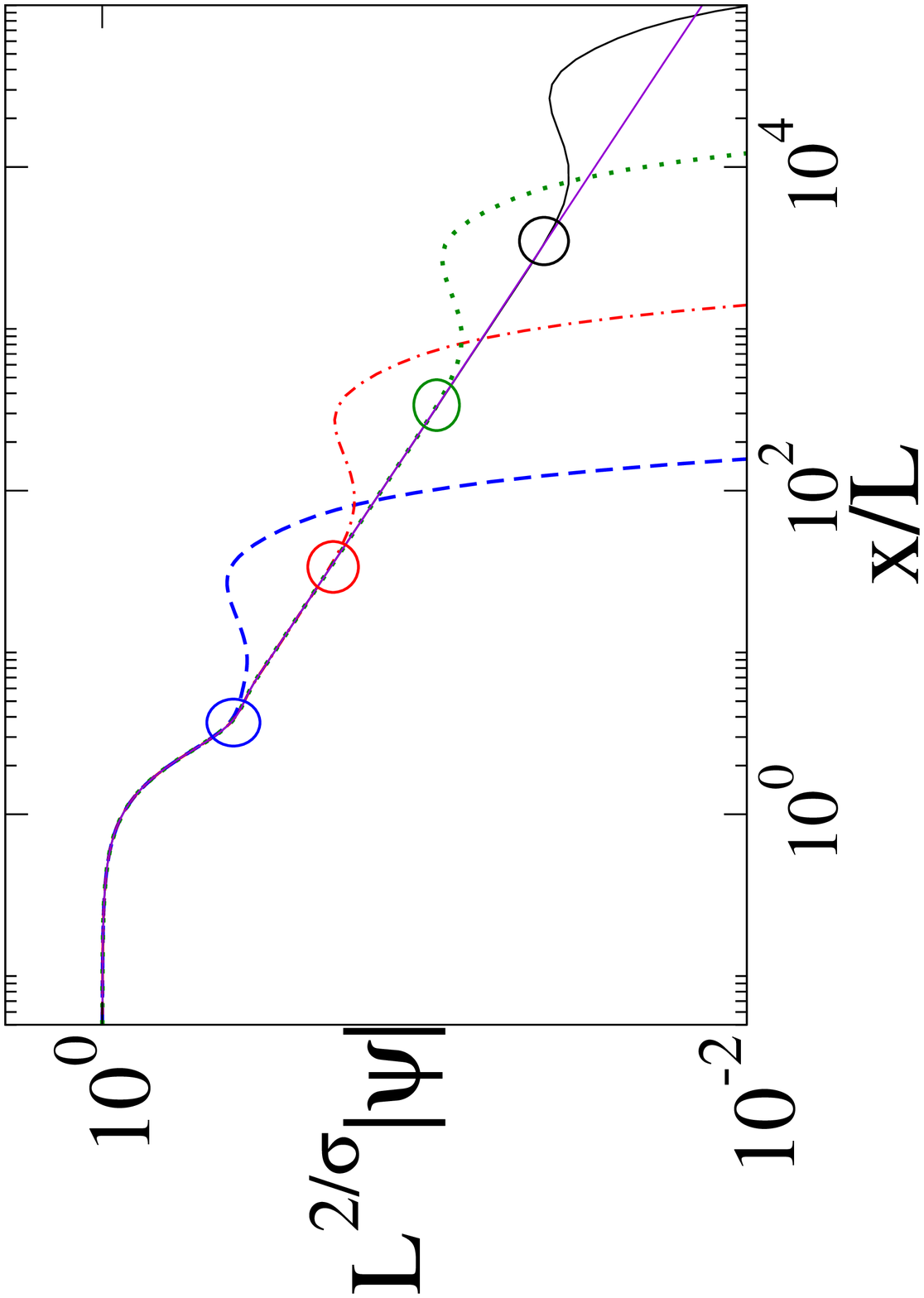}%
    }
    \subfloat[$d=2,\sigma=3$]{%
        \includegraphics[angle=-90,clip,width=0.45\textwidth]%
            {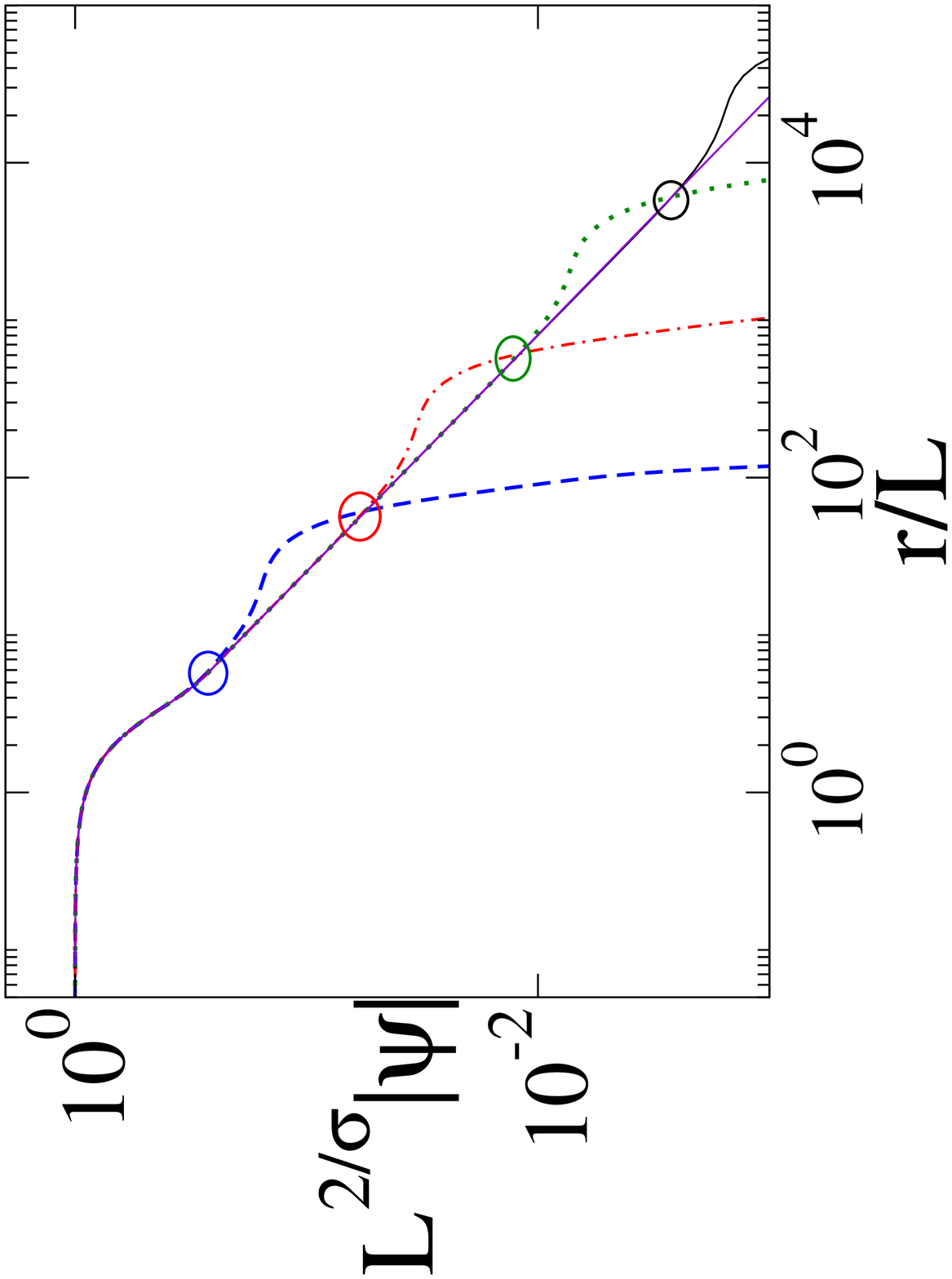}%
    }

    \mycaption{\label{fig:supercrit_peak_convergence}
        Convergence to a self-similar profile. 
        The solutions of Figure~\ref{fig:supercrit_peak_amp_Ls}, rescaled
        according to~\eqref{eq:psi_rescaled_peak}, as a function
        of~$\log(r/L)$, at the focusing levels  
            $L=10^{-1}$ (dashed blue line),
            $L=10^{-2}$ (dash-doted red line),
            $L=10^{-3}$ (dotted green line),
            $L=10^{-4}$ (solid black line)
            and 
            $L=10^{-8}$ (solid magenta line).
        The circles mark the approximate position where each curve bifurcates
        from the limiting profile, see also
        Table~\ref{tab:supercrit_peak_convergence}.
    }
\end{figure}
\begin{table}
    \centering
    \begin{tabular}{|c||c|c|c|c||c|} 
        \hline
        $1/L$ & $10$ & $100$ & $1000$ & $10000$ & $r_c$ \\
        \hline
        \hline
        $x/L~(d=1)$ &
            $3.6$ & $36$ & $360$ & $3600$ & $0.36$ \\
        \hline
        $r/L~(d=2)$ &
            $6$ & $60$ & $600$ & $6000$ & $0.6$\\
        \hline
    \end{tabular}

    \mycaption{\label{tab:supercrit_peak_convergence}%
        Position of circles in Figure~\ref{fig:supercrit_peak_rescaled_far}.
    }
\end{table}
We next verify that the solution converges to the asymptotic profile
for~$r\in[0,r_c]$, i.e., for~$\rho\in[0,r_c/L(t)]$.
To do this, we plot in Figure~\ref{fig:supercrit_peak_convergence} the rescaled 
solution at focusing levels of~$1/L=10,100,1000,10000$, as a function
of~$\log(r/L)$.
The curves are indistinguishable at~$r/L=\mathcal{O}(1)$, but bifurcate at
increasing values of~$r/L$.
These ``bifurcations positions'' are marked by circles in
Figure~\ref{fig:supercrit_peak_convergence}, and their $r/L$ values are listed in
Table~\ref{tab:supercrit_peak_convergence}.
The ``bifurcation positions'' are linear in~$1/L$, indicating that the region
where~$\psi\sim\psi_\BS$ is indeed~$\rho\in[0,r_c/L(t)]$, which corresponds
to~$r\in[0,r_c]$.

\begin{figure}
\centering
    \subfloat[$d=1,\sigma=6$]{%
        \includegraphics[clip,width=0.35\textwidth,angle=-90]%
            {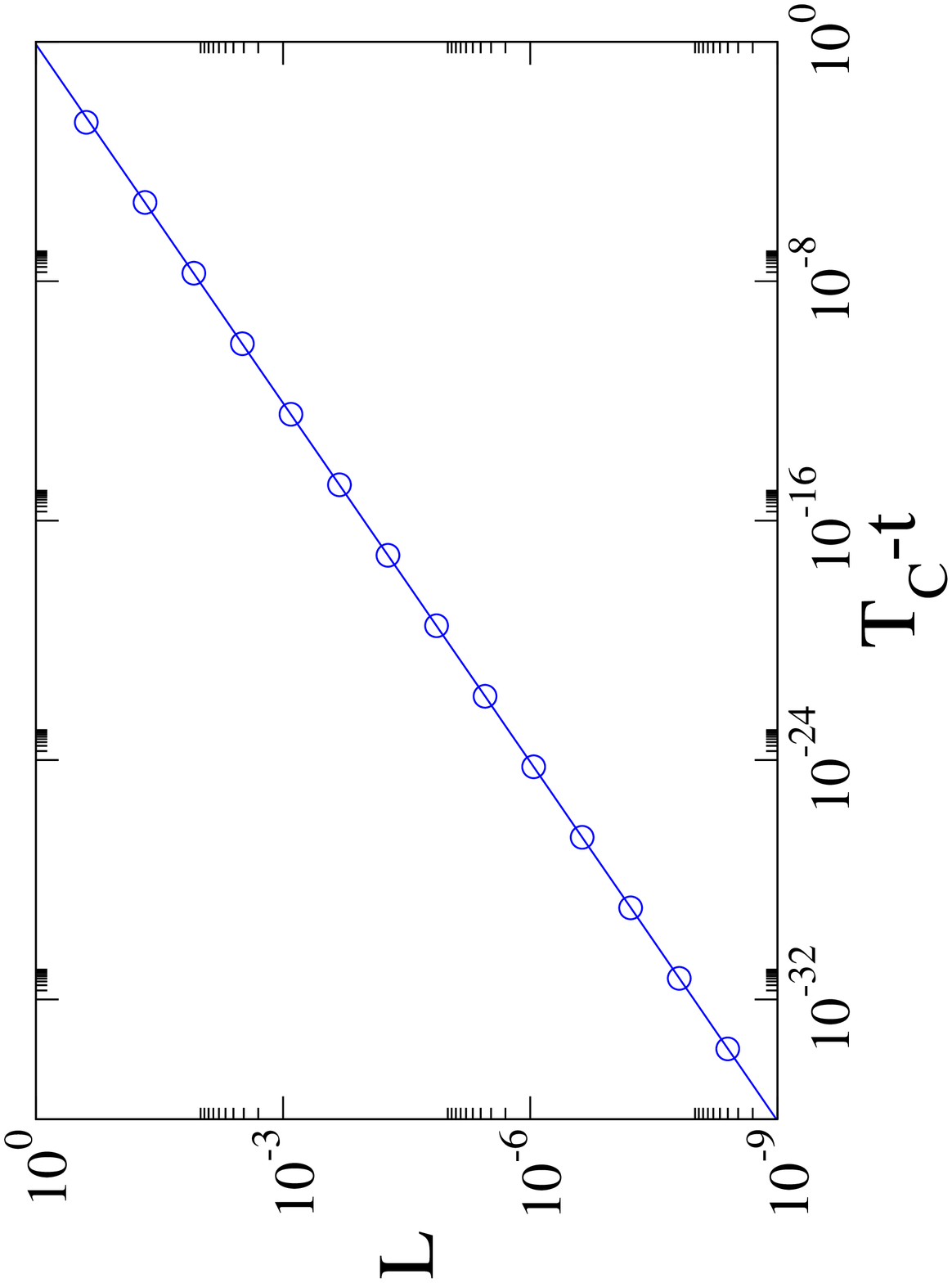}
    }
    \subfloat[$d=2,\sigma=3$]{%
        \includegraphics[clip,width=0.35\textwidth,angle=-90]%
            {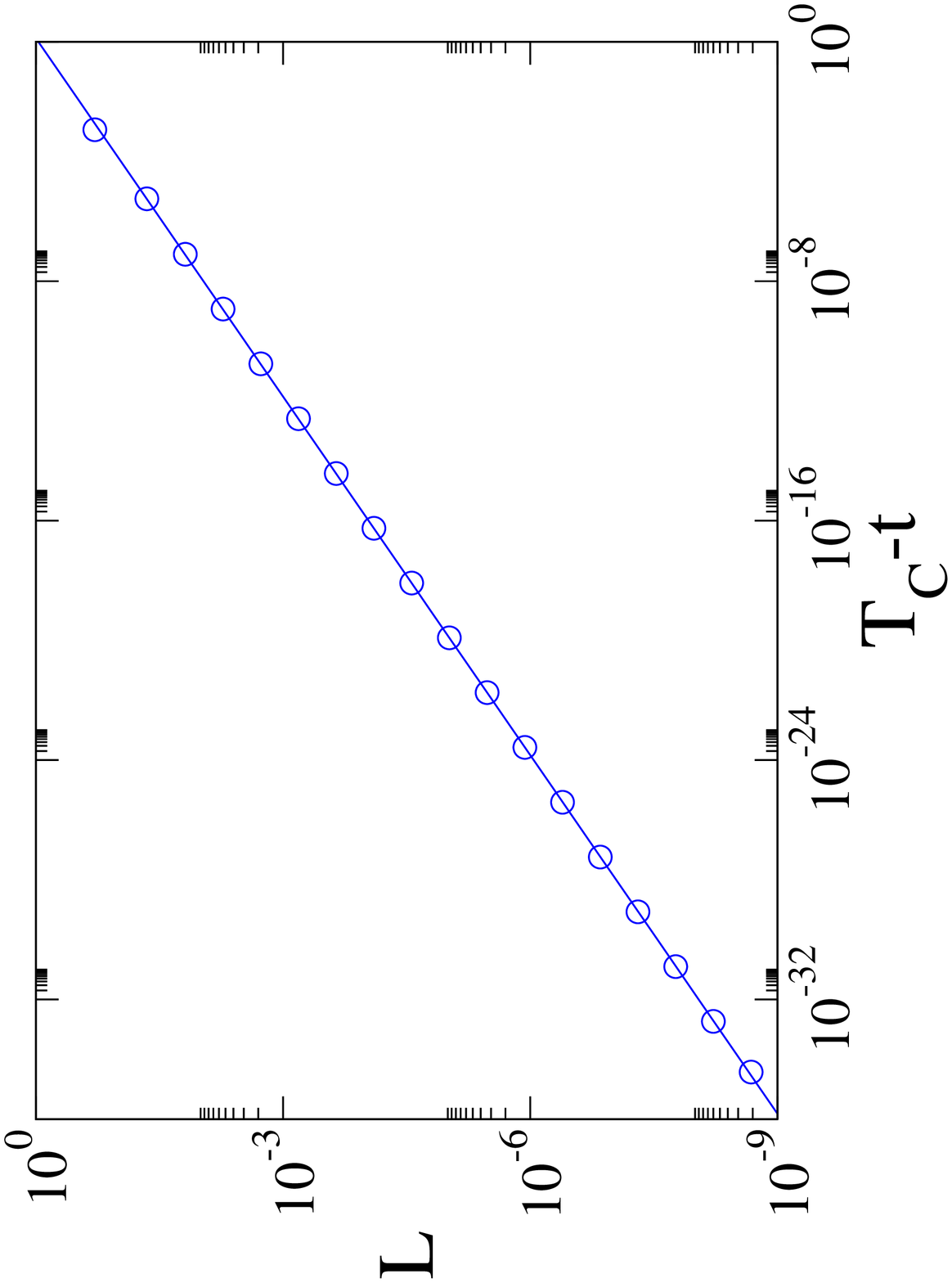}
    }

    \mycaption{\label{fig:supercrit_peak_powerlaw}
        $L(t)$ as a function of~$\left.(\TCrit-t)\right.$, on a logarithmic
        scale, for the solutions of Figure~\ref{fig:supercrit_peak_amp_Ls}
        (circles).
        Solid lines are the fitted
        curves~$\left.L=1.048\cdot(\TCrit-t)^{0.2502}\right.$ (A) 
        and~$\left.L=0.931\cdot(\TCrit-t)^{0.2504}\right.$ (B).
    }
\end{figure}
\begin{figure}
    \begin{center}
    \subfloat[]{%
        \label{fig:peak_L3Lt}%
        \includegraphics[clip,width=0.35\textwidth,angle=-90]%
            {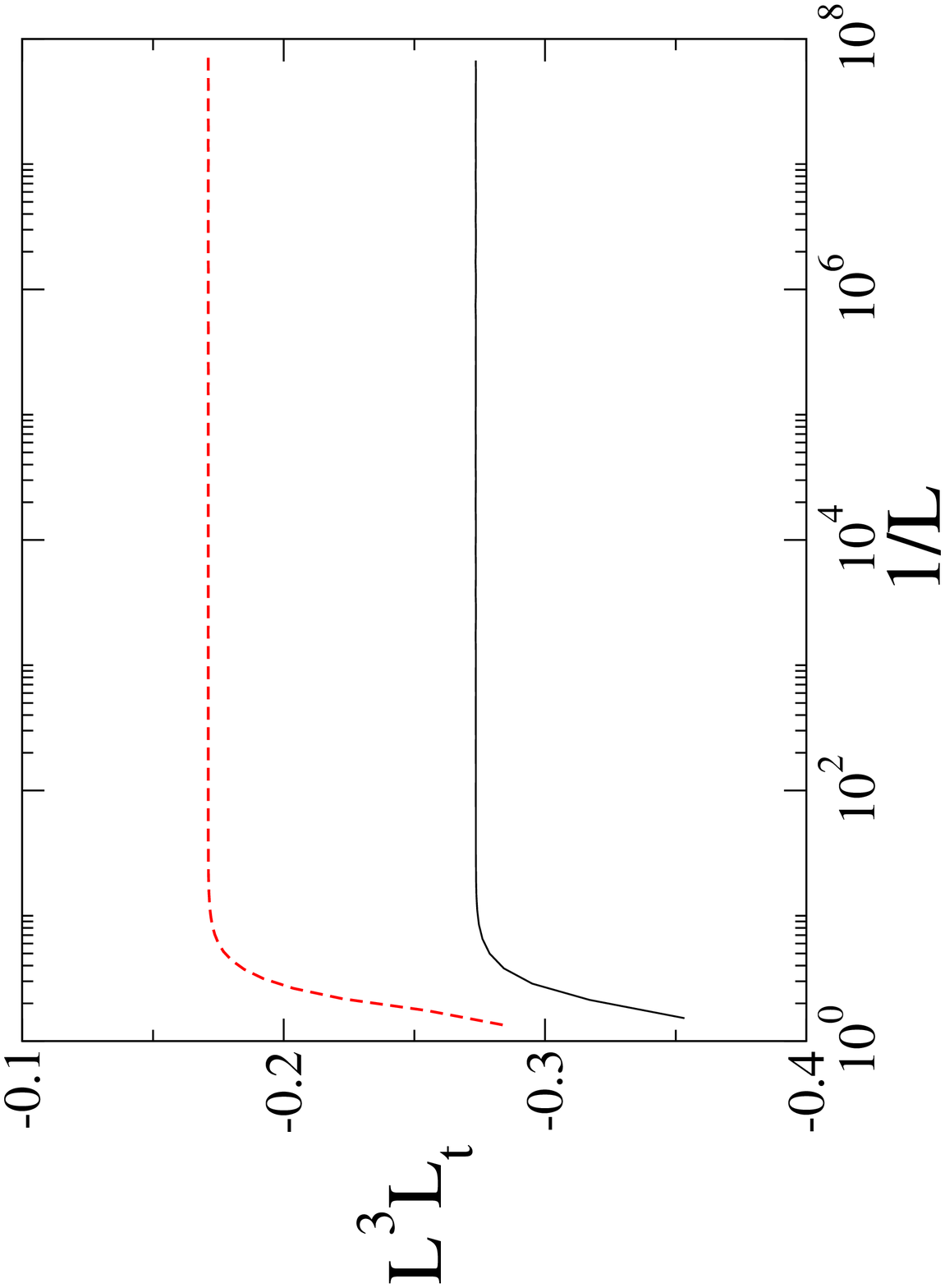}%
    }
    \subfloat[]{%
        \label{fig:peak_L4phit}%
        \includegraphics[clip,width=0.35\textwidth,angle=-90]%
            {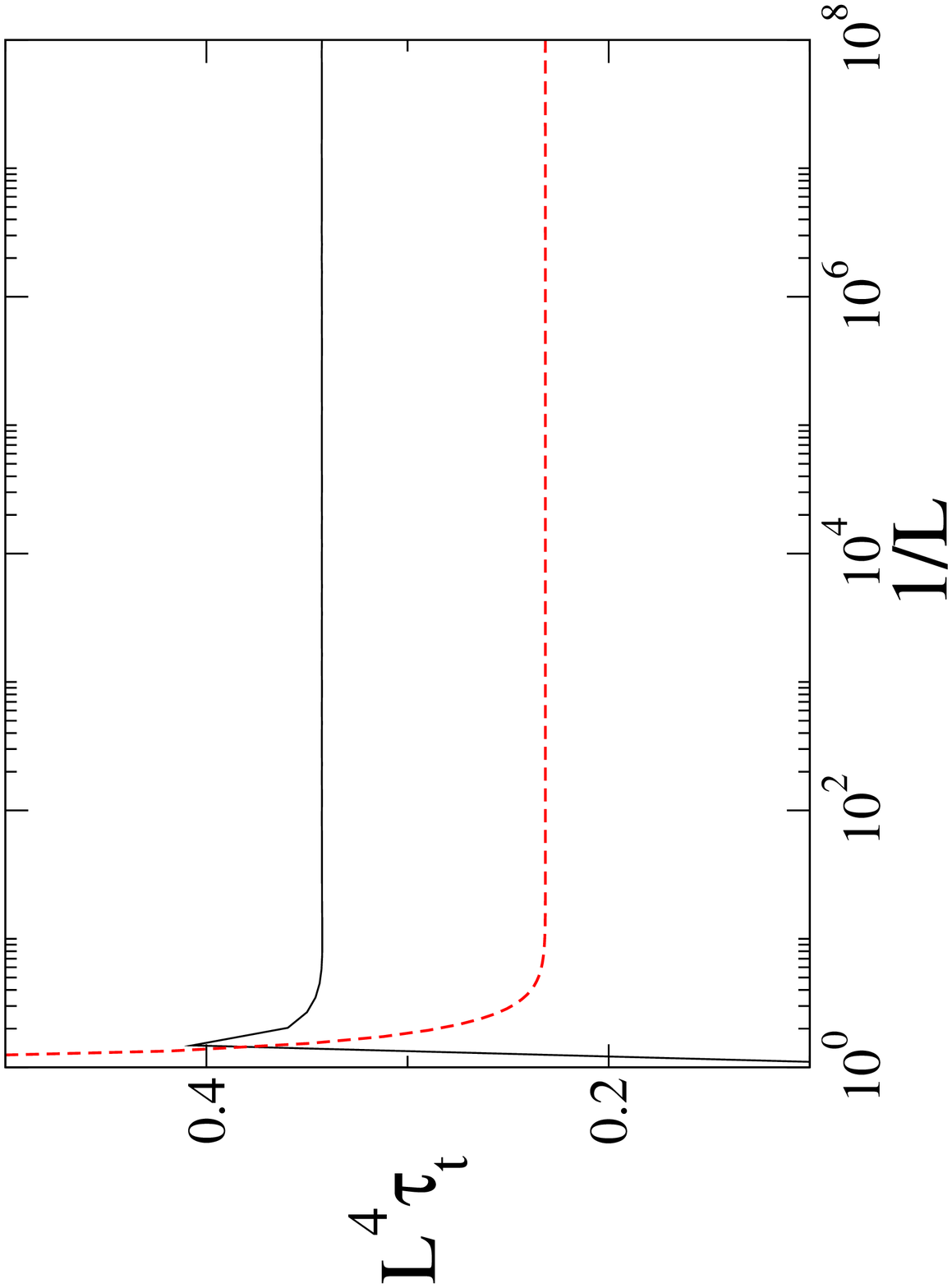}%
    }
    \end{center}
    \mycaption{
        A:~$L^3L_t$ as a function of~$1/L$, for the solution of
        Figure~\ref{fig:supercrit_peak_amp_Ls_1D} (black solid line) and 
        of Figure~\ref{fig:supercrit_peak_amp_Ls_2D} (red dashed line).
        B:~Same as~(A) for $L^4\tau_t$, where~$\tau=\arg\psi(t,r=0)$.
    }
\end{figure}
In order to compute the blowup rate~$p$,  we performed a least-squares fit of~$\log(L)$
with $\log(T_c-t)$, see Figure~\ref{fig:supercrit_peak_powerlaw}.
The resulting values are~$p\approx 0.2502$ in the~$d=1,\sigma=6$ case, 
and~$p\approx 0.2504$ in the~$d=2,\sigma=3$ case.
Next, we provide two indications that the blowup rate is exactly~$1/4$, i.e.,
that
\[
    L(t)\sim \kappa\sqrt[4]{\TCrit-t},\qquad \kappa>0.
\]
\begin{enumerate}
  \item If the blowup rate is exactly a quartic root, then~$
    L^3L_t\to -\frac{\kappa^4}{4}<0.
$
Indeed, Figure~\ref{fig:peak_L3Lt} shows that 
in the case~$d=1$,~$\sigma=6$,~$L^3L_t\to -0.289$, implying that 
\begin{subequations}
\label{eq:kappa_numeric}
\begin{equation}    \label{eq:kappa_d1s6}
    \kappa^{\rm admis.}(d=1,\sigma=6) \approx \sqrt[4]{4\cdot0.289} \approx 1.037\,.
\end{equation}
In the case~$d=2$,~$\sigma=3$,~$L^3L_t\to -0.171$, implying that 
\begin{equation}    \label{eq:kappa_d2s3}
    \kappa^{\rm admis.}(d=2,\sigma=3) \approx \sqrt[4]{4\cdot0.171} \approx 0.909\,.
\end{equation}
\end{subequations}
Since~$L^3L_t$ converges to a finite, negative constant, this shows that the
blowup rate is exactly~$1/4$.

 \item According to Lemma~\ref{lem:supercrit_peak_rate_profile},
if $\lim_{t \to T_c} L^3L_t< 0$, the self-similar profile~$\BS(\rho)$ does not satisfy the
standing-wave equation~\eqref{eq:stationary_state}, but rather is a
solution to the problem~\eqref{eq:supercrit_peak_ODE}, as is clearly
demonstrated in Figure~\ref{fig:supercrit_peak_rescaled}.

\end{enumerate}

We recall that calculation of the profile~$\BS$ requires knowing the numerical values
for~$\nu$ and~$\kappa$, see \myappendix\ref{sec:BS_numerics}.
The value of~$\kappa$ was obtained previously from the limit $\lim_{t \to T_c} (-4L^3L_t)^{1/4}$.
We approximate the value of the coefficient~$\nu$ from
\begin{equation}    \label{eq:nu_num}
    \nu_{\text{numeric}} = \lim_{t\to\TCrit}
            L^4(t)
            \frac{d \tau}{dt},\qquad 
            \tau = \arg \psi(t,r=0) .
\end{equation}
Indeed, Figure~\ref{fig:peak_L4phit} shows that in both cases~$
            L^4(t)
            \frac{d\tau}{dt}
$ 
quickly converge to 
\begin{equation} \label{eq:nu_numeric} 
    \nu_{\text{numeric}}(d=1,\sigma=6) = 0.36187, \qquad 
    \nu_{\text{numeric}}(d=2,\sigma=3) = 0.22826.
\end{equation}
 
As a further verification, using the above values for~$\nu$, we seek a value
of~$\kappa$ such that the solution of~\eqref{eq:supercrit_peak_ODE} will
satisfy~$|\BS(0)|=1$, and obtain
 \[
    \kappa(d=1,\sigma=6,\nu=0.36187) = 1.007,
    \quad 
    \kappa(d=2,\sigma=3,\nu=0.22826) = 0.894.
\]
These values are within 1\%--3\% from the values of~$1.037$ 
and~$0.909$ we obtained directly from the BNLS simulations, 
see~(\ref{eq:kappa_numeric}). 

Finally, we verified that the value of~$\kappa$ in the blowup
rate~\eqref{eq:rate_14} is universal.
We solve the BNLS in the case $d=1,\sigma=6$ with the initial
condition~$\psi_0(x)=2e^{-x^4}$.
In this case, the calculated value of~$\kappa(d=1,\sigma=6)$ is~$
    \kappa=\displaystyle\lim_{t\to\TCrit}\sqrt[4]{-4L_tL^3}\approx1.037
$, which is equal, to first~$3$ significant digits, to the previously obtained
value, see~\eqref{eq:kappa_d1s6}, for the initial condition~$\psi_0(x)=1.6e^{-x^2}$.
Similarly, in the case $d=2,\sigma=3$, we solve the equation with the initial
condition~$\psi_0(x)=3e^{-x^4}$. 
The calculated value of~$\kappa(d=2,\sigma=3)$ is~$
    \kappa=\displaystyle\lim_{t\to\TCrit}\sqrt[4]{-4L_tL^3}\approx0.913
$, which is equal, to first~$2$ significant digits, to the previously obtained
value, see~\eqref{eq:kappa_d2s3}, for the initial condition~$\psi_0(x)=3e^{-x^2}$.

\subsection*{Acknowledgments} 
We thank Nir Gavish for useful discussions, and Elad Mandelbaum for preliminary
numerical calculations.
This research was partially supported by grant \#123/2008 from the Israel
Science Foundation (ISF).

\appendix

\section{Numerical calculation of the $\BS$ profile}
\label{sec:BS_numerics}

\begin{subequations} \label{eqs:L} 
    In order to solve equation~\eqref{eq:supercrit_peak_ODE}, we first
    define its linear part, which is the fourth-order linear differential
    operator~$L\left[S\right]$ 
    \begin{eqnarray}
        L\left[S(\rho)\right] & = & 
        -\nu S(\rho) 
        + i\frac{\kappa^4}{4} \left(
            \frac2\sigma S + \rho S^{\prime}
        \right) 
        -\Delta_{\rho}^{2}S,\label{eq:L_op}
    \end{eqnarray}
    under the BCs, see equation~\eqref{eq:explicit_ODE_BCs_OH},
    \begin{equation}
    \begin{gathered}
        \label{eq:BCs-geom}
        S^{\prime}(0) = S^{\prime\prime\prime}(0)=S(\infty)=0,
        \qquad
        \lim_{\rho\to\infty} 
            \rho^{\gamma} \left( 
            \rho S^{\prime}+\left(
                \frac{2}{\sigma}+i\frac{4\nu}{\kappa^4} 
            \right)S
            \right)
            =0 \\
            \gamma_0<\gamma<\gamma_1,\qquad 
            \gamma_0 =\frac23\left( d-2-\frac2\sigma \right),\quad 
            \gamma_1 = 4+\frac2\sigma.
    \end{gathered}
    \end{equation}
\end{subequations}
The nonlinear ODE~\eqref{eq:supercrit_peak_ODE} is therefore rewritten as 
\begin{equation}
    L\left[S(\rho)\right]+\left|S\right|^{2\sigma}S = 0. \label{eq:profile}
\end{equation}

For given numerical values of~$\nu$ and~$\kappa$, we wish to calculate the
ground state of the nonlinear boundary-value problem~\eqref{eq:profile}.
In order to do so, we modify the SLSR method for the calculation of the
ground-state of the
NLS~\cite{petviashvili:1976,Pelinovsky:2004,Ablowitz-Musslimani-SLSR:2005} and
BNLS~\cite{Baruch_Fibich_Mandelbaum:2009a} as follows.
We consider the fixed-point iterative scheme
\begin{equation}    \label{eq:SLSR_unnormalized}
    S^{\left(k+1\right)}\left(\rho\right) 
        =-L^{-1}
            \left[\left|S^{\left(k\right)}\right|^{2\sigma}S^{\left(k\right)}\right],
            \qquad k=0,1,\dots
\end{equation}
for the solution of~\eqref{eq:profile}.
In the standard application of the SLSR method,~$L$ is a differential operator
of constant coefficients, and its inversion is easily performed using the
Fourier transform.
In our case,~$L$ is a variable-coefficient operator, and the Fourier Transform
cannot be used.
Therefore, we discretize the operator~$L$ using finite differences, see
\myappendix~\ref{ssec:L_r}, and invert it using the LU decomposition.

We observe numerically that generically, the iterations~\eqref{eq:SLSR_unnormalized} converge to zero for a small initial guess and diverge to infinity for a large initial guess.
To avoid this divergence, we rescale the approximate solutions at each
iteration, so that they satisfy the integral relation: 
\[
    \int|S|^{2}\rho^{d-1}d\rho =
        \langle S,S\rangle  
    = -\text{Re} \langle 
        S,L^{-1}|S|^{2\sigma}S
    \rangle, 
\]
which follows from multiplication of~\eqref{eq:SLSR_unnormalized} by~$S$. Here, $<\cdot>$~denotes the standard inner product $\langle f,g\rangle  = \int f^{*}g\rho^{d-1}d\rho$.
Following a similar argumentation as in~\cite{Baruch_Fibich_Mandelbaum:2009a},
we obtain that the iterations are
\begin{equation}    \label{eq:SLSR}
    S^{(k+1)} = -\left(
        \frac{
            -\langle S^{(k)},S^{(k)}\rangle 
        }{
            \text{Re}\left\langle
                S^{(k)}, L^{-1}|S^{(k)}|^{2\sigma}S^{(k)}
            \right\rangle
        }
    \right)^{1+\frac{1}{2\sigma}}
    L^{-1} \left[
        |S^{(k)}|^{2\sigma}S^{(k)}
    \right]. 
\end{equation}

In our simulation, this method converged for every value of~$\nu$
and~$\kappa$ that we tried.
The numerical values of~$\nu$ was obtained from the on-axis phase of the BNLS
simulation solutions, as explained in Section~\ref{ssec:simulations}.
In order to obtain a prediction of~$\kappa$, we recall that the specific
choice~\eqref{eq:psi_rescaled_peak} of the blowup rate~$L(t)$ implies
that~$|\BS(0)|=1$.
We therefore use the SLSR solver to search for the value of~$\kappa$ for
which~$|\BS(0)=1|$.

\subsection{\label{ssec:L_r} Discretization of~$L$}

Using half-integer grid \[
    \rho_{n}=\left(n+\frac{1}{2}\right)h,
    \qquad n=0,\dots N-1,
    \qquad h=\frac{R_{\max}}{N},
\]
the ${\cal O}\left(h^{2}\right)$ centered-difference discretizations
of the radial biharmonic operator $D_{\rho}^{2}$ and of the first-derivative,
the approximation at the interior nodes is 
\begin{eqnarray*}
    \left(-\nu+\frac{i\kappa^4}{2\sigma}\right)S_{n}
    +\frac{i\kappa^4}{4}\rho_{n}\frac{S_{n+1}-S_{n-1}}{2h}
    -D_{\rho}^{2}S_{n}
    +\left|S_{n}\right|^{2\sigma}S_{n} 
    & = & {\cal O}\left(h^{2}\right).
\end{eqnarray*}

The stencil is five-nodes wide, so two ghost-nodes are needed at each
boundary.
In order to enfold the ghost-nodes at~$\rho=0$, we relate them to the interior
nodes, using the symmetry of the solution~$S(\rho)=S(-\rho)$, so that \[
    \left[\begin{array}{c} S_{-2} \\ S_{-1} \end{array}\right]
    =
    \left[\begin{array}{cc} 0 & 1 \\ 1 & 0 \end{array}\right]
        \left[\begin{array}{c} S_{0} \\ S_{1} \end{array}\right].
\]
This relation is substituted in the discretization of the equation at~$\rho_0$
and~$\rho_1$.

At the other boundary $\rho=R_{\max}$ we use the approximate form of the
solution obtained from the WKB approximation, i.e., we require 
\[
S_n = 
    c_1 \BS_{,1}(\rho_n)
    +c_4 \BS_{,4}(\rho_n),
    \qquad n= N-2,N-1,N,N+1,\dots .
\]
In matrix form, this becomes  
\begin{equation*}
    \left[\begin{array}{cc}
        \BS_{,1}(\rho_{N-1}) & \BS_{,4}(\rho_{N-1})\\
        \BS_{,1}(\rho_{N}) & \BS_{,4}(\rho_{N})\\
        \BS_{,1}(\rho_{N+1}) & \BS_{,4}(\rho_{N+1})\\
        \BS_{,1}(\rho_{N+2}) &
        \BS_{,4}(\rho_{N+2})
    \end{array}\right]
    \left[\begin{array}{c} c_1\\ c_4 \end{array}\right]
    = \left[\begin{array}{c}
        S_{N-1}\\
        S_{N}\\
        S_{N+1}\\
        S_{N+2}
    \end{array}\right]
\end{equation*}
which is then solved to obtain \[
    \left[\begin{array}{c}
        S_{N+1}\\
        S_{N+2}
    \end{array}\right]
    =
    \left[\begin{array}{cc}
        \BS_{,1}(\rho_{N+1}) & \BS_{,4}(\rho_{N+1})\\
        \BS_{,1}(\rho_{N+2}) & \BS_{,4}(\rho_{N+2})
    \end{array}\right]
    \left[\begin{array}{cc}
        \BS_{,1}(\rho_{N-1}) & \BS_{,4}(\rho_{N-1})\\
        \BS_{,1}(\rho_{N}) & \BS_{,4}(\rho_{N})
    \end{array}\right]^{-1}
    \left[\begin{array}{c}
        S_{N-1}\\
        S_{N}
    \end{array}\right].
\]

Some care should be taken when choosing the parameters~$R_{\max}$ and~$N$.
On the one hand, we use the closed-form approximations for~$\BS_{,1}$
and~$\BS_{,4}$ that become more accurate for~$R_{\max}\gg1$.
On the other hand, since~$\BS_{,4}$ has a super-exponentially decreasing
term~$e^{-\rho^{4/3}}$, choosing too-large a value of~$R_{\max}$ leads to
numerical instabilities.
Finally, in order to resolve the rapid-oscillations~$e^{i\rho^{4/3}}$
of~$\BS_{,4}$, the grid-size~$N$ must be chosen such that~$
    \rho^{1/3}h_{\rho} 
    = \mathcal{O}\left( R_{\max}^{4/3}/N \right) \ll 1
$, hence that~$\left.N \geq \mathcal{O}\left( R_{\max}^{4/3} \right)\right.$.
The grid-size~$N$, however, cannot be arbitrarily large, since the condition
number of~$L$ is~$\mathcal{O}\left( N^4 \right)$.

In the simulations presented in this study, we used an extension of  above approach to a fourth-order approximation,
and set~$R_{\max}=160$ and~$N=32000$.

~


\begin{thebibliography}{BFM10b}

\bibitem[AM05]{Ablowitz-Musslimani-SLSR:2005}
MJ~Ablowitz and ZH~Musslimani.
\newblock {Spectral renormalization method for computing self-localized
  solutions to nonlinear systems}.
\newblock {\em Opt. Lett.}, {30}({16}):{2140--2142}, {2005}.

\bibitem[BAKS00]{Ben-Artzi-00}
M.~Ben-Artzi, H.~Koch, and J.-C. Saut.
\newblock Dispersion estimates for fourth order {S}chr\"odinger equations.
\newblock {\em C. R. Acad. Sci. Paris S\'er. I Math.}, 330:87--92, 2000.

\bibitem[BFG10]{Baruch_Fibich_Gavish:2009}
G.~Baruch, G.~Fibich, and N.~Gavish.
\newblock Singular standing ring solutions of nonlinear partial differential
  equations.
\newblock {\em Physica D}, 239(20):1968--1983, 2010.

\bibitem[BFM10a]{Baruch_Fibich_Mandelbaum:2009b}
G.~Baruch, G.~Fibich, and E.~Mandelbaum.
\newblock Ring-type singular solutions of the biharmonic nonlinear
  {S}chr\"odinger equation.
\newblock {\em Nonlinearity}, 23(11):2867, 2010.

\bibitem[BFM10b]{Baruch_Fibich_Mandelbaum:2009a}
G.~Baruch, G.~Fibich, and E.~Mandelbaum.
\newblock Singular solutions of the ${L}^2$-critical biharmonic nonlinear
  {S}chr\"odinger equation.
\newblock {\em SIAM J. of App. Math.}, to appear, 2010.

\bibitem[BP92]{Berge-92}
L.~Berg\'e and D.~Pesme.
\newblock Time dependent solutions of wave collapse.
\newblock {\em Phys. Lett. A}, 166:116--122, 1992.

\bibitem[Bud01]{Budd-01}
C.~J. Budd.
\newblock Asymptotics of multibump blow-up self-similar solutions of the
  nonlinear {S}chr\"odinger equation.
\newblock {\em SIAM J. Appl. Math.}, 62:801--830 (electronic), 2001.

\bibitem[CHL11]{ChaeHongLee:2009}
{M}. {C}hae, {S}. {H}ong, and {S}. {L}ee.
\newblock {M}ass concentration for the ${L}^2$-critical {N}onlinear
  {S}chrodinger equations of higher orders.
\newblock {\em Discrete and Continuous Dynamical Systems (DCDS-A)}, 29(3):909
  -- 928, 2011.

\bibitem[DG09]{SGR-08}
A.~Ditkowsky and N.~Gavish.
\newblock A grid redistribution method for singular problems.
\newblock {\em J. Comp. Phys.}, 228:2354--2365, 2009.

\bibitem[FIP02]{Fibich_Ilan_George_BNLS:2002}
Gadi Fibich, Boaz Ilan, and George Papanicolaou.
\newblock Self-focusing with fourth-order dispersion.
\newblock {\em SIAM J. Applied Math.}, 62(4):1437--1462, 2002.

\bibitem[KL95]{Koppel-95}
N.~Koppel and M.~Landman.
\newblock Spatial structure of the focusing singularity of the nonlinear
  {S}chr\"odinger equation: a geometrical analysis.
\newblock {\em SIAM J. Appl. Math.}, 55:1297--1323, 1995.

\bibitem[LPSS88]{LeMesurier-88a}
B.J. LeMesurier, G.~Papanicolaou, C.~Sulem, and P.L. Sulem.
\newblock Focusing and multi-focusing solutions of the nonlinear
  {S}chr\"{o}dinger equation.
\newblock {\em Physica D}, 31:78--102, 1988.

\bibitem[MRS09]{Szeftel-09}
F.~Merle, P.~Raphael, and J.~Szeftel.
\newblock Stable self similar blow up dynamics for slightly {$L^2$}
  supercritical {NLS} equations.
\newblock {\em preprint http://arXiv.org/abs/0907.4098}, 2009.

\bibitem[MXZ08]{miao-2008}
Changxing Miao, Guixiang Xu, and Lifeng Zhao.
\newblock Global wellposedness and scattering for the defocusing
  energy-critical nonlinear {S}chrodinger equations of fourth order in
  dimensions $d\geq9$.
\newblock {\em arXiv preprint}, 2008.

\bibitem[MXZ09]{Miao20093715}
Changxing Miao, Guixiang Xu, and Lifeng Zhao.
\newblock Global well-posedness and scattering for the focusing energy-critical
  nonlinear {S}chrödinger equations of fourth order in the radial case.
\newblock {\em J. of Differ. Equations}, 246(9):3715 -- 3749, 2009.

\bibitem[Pau07]{Pausader_DPDEs-2007}
B.~Pausader.
\newblock Global well-posedness for energy critical fourth-order {S}chrodinger
  equations in the radial case.
\newblock {\em Dynamics of {PDE}}, 4(3):197--225, 2007.

\bibitem[Pau09a]{Pausader_JFA2009}
Benoit Pausader.
\newblock The cubic fourth-order {S}chrödinger equation.
\newblock {\em Journal of Functional Analysis}, 256(8):2473 -- 2517, 2009.

\bibitem[Pau09b]{Pausader_DCDS-2009a}
Benoit Pausader.
\newblock The focusing energy-critical fourth-order {S}chrödinger equation
  with radial data.
\newblock {\em Discrete and continuous dynamical systems}, 24(4):1275--1292,
  August 2009.

\bibitem[{Pet}76]{petviashvili:1976}
V.~I. {Petviashvili}.
\newblock {Equation of an extraordinary soliton}.
\newblock {\em Sov. J. Plasma Phys.}, 2:469--472, June 1976.

\bibitem[PS01]{Pelinovsky:2004}
Dmitry~B. Pelinovsky and Yury~A. Stepanyants.
\newblock convergence of petviashvili's iteration method for numerical
  approximation of stationary solutions of nonlinear wave equations.
\newblock {\em SIAM Journal on Numerical Analysis}, 42(3):1110 -- 1127,
  20040601.

\bibitem[RW00]{Ren-00}
W.~Ren and X.P. Wang.
\newblock An iterative grid redistribution method for singular problems in
  multiple dimensions.
\newblock {\em J. Comput. Phys.}, 159:246--273, 2000.

\bibitem[SKL93]{Shvets-93}
V.F. Shvets, N.E. Kosmatov, and B.J. LeMesurier.
\newblock On collapsing solutions of the nonlinear {S}chr\"odinger equation in
  supercritical case.
\newblock In R.E. Caflisch and G.C. Papanicolaou, editors, {\em Singularities
  in Fluids, Plasmas and Optics}, pages 317--321. Kluwer, 1993.

\bibitem[SS99]{Sulem-99}
C.~Sulem and P.L. Sulem.
\newblock {\em The Nonlinear {S}chr\"{o}dinger Equation}.
\newblock Springer, New-York, 1999.

\bibitem[ZZY10]{ZhuZhangYang:2010}
S.~Zhu, J.~Zhang, and H.~Yang.
\newblock Limiting profile of the blow-up solutions of the fourth-order
  {N}onlinear {S}chrodinger equation.
\newblock {\em Dynamics of PDE}, 7(2):187 -- 205, 2010.

\end{thebibliography}

\end{document}